\newcommand*{\mailto}[1]{\href{mailto:#1}{\nolinkurl{#1}}}
\newtheorem{theorem}{Theorem}[section]
\newtheorem{lemma}[theorem]{Lemma}
\newtheorem{definition}[theorem]{Definition}
\theoremstyle{definition}
\newtheorem{remark}[theorem]{Remark}
\numberwithin{equation}{section}     
\renewcommand{\i}{\ifmmode\mathit{\mathchar"7010 }\else\char"10 \fi}
\renewcommand{\j}{\ifmmode\mathit{\mathchar"7011 }\else\char"11 \fi}
\newcommand{\R}{\mathbb{R}}
\newcommand{\abs}[1]{\left|#1\right|}
\newcommand{\norm}[1]{\left\|#1\right\|}
\newcommand{\pt}{\partial_t}
\newcommand{\px}{\partial_x}
\newcommand{\ps}{\partial_s}
 \newcommand{\ptt}{\partial_{tt}^2}
\newcommand{\test}{\varphi}
\newcommand{\eps}{\varepsilon}
\newcommand{\vm}{v^\mu}
\newcommand{\abseps}[1]{\abs{#1}_\eps}
\newcommand{\um}{u^\mu}
\newcommand{\Dx}{{\Delta x}}
\newcommand{\Dt}{{\Delta t}}
\newcommand{\unpj}{u^{n+1}_j}
\newcommand{\unj}{u^n_j}
\newcommand{\vnj}{v^n_{j+1/2}}
\newcommand{\vnpj}{v^{n+1}_{j+1/2}}
\newcommand{\vnmj}{v^{n+1}_{j-1/2}}
\newcommand{\kjm}{k_{j-1/2}}
\newcommand{\kjp}{k_{j+1/2}}
\newcommand{\Dp}{\Delta_+}
\newcommand{\Dm}{\Delta_-}
\newcommand{\ddm}{D_-}
\newcommand{\ddp}{D_+}
\newcommand{\dpt}{D^t_+}
\DeclareMathOperator*{\sign}{sign}
\newcommand{\sgn}[1]{\sign\left(#1\right)}
\newcommand{\dv}[1]{\mathrm{div}\left(#1\right)}
\newcommand{\dvx}[1]{\mathrm{div}_x\left(#1\right)}
\newcommand{\dvy}[1]{\mathrm{div}_y\left(#1\right)}
\newcommand{\dvxy}[1]{\nabla^2_{xy}\left(#1\right)}
\newcommand{\signe}{\sign\nolimits_\eps}
\newcommand{\seq}[1]{\left\{#1\right\}}
\newcommand{\order}[1]{\mathcal{O}\left(#1\right)}
\DeclareMathOperator*{\supp}{supp}
\newcommand{\dott}{\, \cdot\,}
\newenvironment{Assumptions}% Definition of assumptions
{%

\begin{enumerate}}%
{\end{enumerate}}
\title[Singular Diffusion]{Singular Diffusion \\ with Neumann boundary
  conditions} 
\author[G. M.Coclite]{Giuseppe Maria Coclite}
\address[G. M. Coclite]{Department of Mechanics, Mathematics and Management\\
Polytechnic University of Bari\\
Via E. Orabona 4\\
70125 Bari \\
Italy}
\email{\mailto{giuseppemaria.coclite@poliba.it}}
\urladdr{\url{https://sites.google.com/site/coclitegm/}}
\author[H. Holden]{Helge Holden}
\address[H. Holden]{Department of Mathematical Sciences\\
  NTNU Norwegian University of Science and Technology\\
  NO-7491 Trondheim\\ Norway}
\email{\mailto{helge.holden@ntnu.no}}
\urladdr{\url{https://www.ntnu.edu/employees/holden}}
\author[N. H. Risebro]{Nils Henrik Risebro}
\address[N. H. Risebro]{Department of Mathematics\\
   University of Oslo\\
  NO-0316 Oslo\\ Norway}
\email{\mailto{nilshr@math.uio.no}}
\urladdr{\url{https://www.mn.uio.no/math/english/people/aca/nilshr/index.html}}
\keywords{Singular diffusion, Neumann boundary condition}
\subjclass[2020]{Primary 35K20, 35K65; Secondary 65N06}
\date{\today}
\thanks{This research was jointly and partially supported 
by the Research Council of Norway Toppforsk 
project {\em Waves and Nonlinear Phenomena} (250070), it has also received funding from the European Union's
Horizon 2020 research and innovation programme under the Marie Sk\l{}odowska-Curie grant
agreement No 642768.\\
\indent GMC is member of the Gruppo Nazionale per l'Analisi Matematica, la Probabilit\`a e le loro Applicazioni (GNAMPA) 
of the Istituto Nazionale di Alta Matematica (INdAM)}
\begin{document}

\begin{abstract}
  In this paper we develop an existence theory  
  for the nonlinear initial-boundary value problem with singular
  diffusion $\pt u = \dv{k(x)\nabla G(u)}$, $u|_{t=0}=u_0$ with
  Neumann boundary conditions $k(x)\nabla G(u)\cdot \nu = 0$.  Here
  $x\in B\subset \R^d$, a bounded open set with locally Lipchitz
  boundary, and with $\nu$ as the unit outer normal. The function $G$ is
  Lipschitz continuous and nondecreasing, while $k(x)$ is diagonal
  matrix. 
  We show that  any two weak entropy solutions $u$
  and $v$ satisfy
  $\norm{u(t)-v(t)}_{L^1(B)}\le
  \norm{u|_{t=0}-v|_{t=0}}_{L^1(B)}e^{Ct}$, for almost every $t\ge 0$,
  and a constant $C=C(k,G,B)$.
 If we restrict to the case when  the entries $k_i$ of $k$ depend only on the corresponding component, $k_i=k_i(x_i)$,
 we show that there exists an entropy solution, thus establishing in this case that the problem is well-posed  in the sense of Hadamard.
\end{abstract}

\maketitle

\section{Introduction}

We here study the nonlinear boundary value problem with singular
diffusion
\begin{equation*}
  \pt u = \dv{k(x)\nabla G(u)}, \quad u|_{t=0}=u_0
\end{equation*}
with Neumann boundary conditions
\begin{equation*}
  k(x)\nabla G(u)\cdot \nu = 0.
\end{equation*}
Here $x\in B\subset \R^d$, a bounded open set with locally Lipchitz
boundary, and with $\nu$ as the unit outer normal at the boundary of $B$.
The function $G$ is Lipschitz continuous and nondecreasing, while
$k(x)=\text{diag}(k_1(x_1),\dots,k_d(x_d))$ is a smooth, diagonal matrix
with positive entries $k_i>0$.  The fact that $G'$ may vanish, allows
for singular diffusion, and the factor $k$ permits for spatially
dependent diffusion.  Due to the singular diffusion, this equation has
weak solutions.

Our goal is to prove well-posedness of this equation in appropriate
spaces. Working with weak solutions, we need to impose an appropriate
entropy condition, and we here demand, see Definition
\ref{def:entrsolk_new}, that
\begin{align*}
  \int_{\Omega_T} \big[\eta(u)\pt \test - \eta'(u) (k(x) \nabla G(u)) \nabla\test\big] \,dxdt  
  &-\Big|_{t=0}^{t=T}\int_B \eta(u)\test\,dx   \\
  &\quad \ge \int_{\Omega_T} \eta''(u) \left|h(x)\nabla g(u)\right|^2 \test\,dxdt
\end{align*}
holds for every smooth convex entropy $\eta$ and for all nonnegative
test functions $\phi$.  Here $\Omega_T=[0,T]\times B$.  One of our main
theorem reads, see Theorem \ref{thm:maink}, as follows:

\textit{ 
%The initial-boundary value problem admits a unique weak
%  entropy solution. In addition, the solution is stable in the
%  following sense: 
For two weak entropy solutions $u$ and $v$ with initial data
  $u_0$ and $v_0$, respectively, we have
  \begin{equation}\label{eq:introStab}
    \norm{u(t)-v(t)}_{L^1(B)}\le \norm{u_0-v_0}_{L^1(B)}e^{Ct},
  \end{equation}
  for almost every $t\ge 0$. The constant $C$ depends on $k$, $G$, and $B$. 
  Here we only need to assume that $k$ is a diagonal matrix.
  %Thus the problem is well-posed in the sense of Hadamard.  
  }

As is common, existence and stability are proven by two independent
arguments. Assuming existence of two weak entropy solutions $u$ and
$v$, we proceed using the ingenious doubling of variables due to
Kru\v{z}kov, see \cite{MR3443431}. In this approach, one starts with
entropy conditions for $u$ (in variables $(t,x)$) and $v$ (in
variables $(s,y)$), and considers a test function $\test$ that depends
on all four variables $(t,x,s,y)$.  Integrating over all variables,
and adding the entropy expressions for $u$ and $v$, one finds the
inequality \eqref{eq:en0}.  The next step is the choice of test
function. Here we follow in the steps of Kru\v{z}kov and choose an
approximate Dirac delta function in the time variable $t-s$ and space
variables $x-y$ in addition to a smooth cut-off function near the
boundary.  See equation \eqref{eq:testK}.  Next comes the delicate limits. We
start by taking the $s\to t$ limit. The fact that we deal with a bounded domain 
goes beyond the standard Kru\v{z}kov theory. We find that we need to
remove the  spatial singularity and the cut-off at the boundary 
simultaneously, and, importantly, at a fixed ratio where the spatial 
singularity has to vanish at a faster rate than the boundary cut-off, see
\eqref{eq:Cest}. This proves equation \eqref{eq:introStab}.

As for existence of weak entropy solutions, our starting point is the
regularized equation where we add non-degenerate diffusion. Thus we
consider for $\mu$ positive
\begin{equation*}
  \pt \um = \dv{k(x)\nabla G(\um)}+\mu\Delta \um,\quad  \um|_{t=0}=u_{0}
\end{equation*}
with boundary condition
$\big(k(x)\nabla G(\um(t,x))+\mu\nabla \um(t,x)\big)\cdot\nu=0$.
Existence of solutions for this problem follows from \cite[Theorem
1.7.8]{MPS}.

In our case we have only been able to prove existence of a solution
under the restriction that $k_i=k_i(x_i)$. Our proof is based on
showing compactness of the sequence $\seq{\um}_{\mu> 0}$, we were
not able to deduce this without the simplifying assumption that $k_i$
only depends on $x_i$.  Our main existence result, Theorem
\ref{th:existence}, reads as follows:

\textit{ Assume that $k_i=k_i(x_i)>0$. Then there exists a weak entropy
  solution $u$ the initial-boundary value problem. In particular, we
  have, as $\mu\to 0$, that
  \begin{equation*}
    \um\to u\quad\text{a.e. and in $L^p(\Omega_T)$ for every $T>0$ and $1\le p<\infty$.}
  \end{equation*}
Thus the problem is well-posed in the sense of Hadamard.}

The proof starts with the entropy formulation for smooths solutions,
which yields (cf.~\eqref{eq:smooth_entropy})
\begin{equation*}
  \frac{d}{dt}\int_B \eta(\um)\,dx + \int_B  \eta^{\prime\prime}(\um)G'(\um)|h(x)\nabla\um|^2  \,dx 
  + \mu\int_B  \eta^{\prime\prime}(\um)|\nabla\um|^2  \,dx   = 0
\end{equation*}
for convex entropies $\eta$.  By choosing different functions for the
entropy, we can show a wide range of properties of the approximate
solution $\um$, see Lemma \ref{lm:est1}.  In particular, we show that
$\norm{\nabla G(\um)}_{L^2(\Omega_T)}$ is bounded independently
of $\mu$.  Furthermore, we verify that
\begin{equation*}
  \norm{\partial_{x_j} \um(t)}_{L^1(B_\sigma)}\le \norm{\partial_{x_j} u_0}_{L^1(B)}e^{ C_\sigma t},
\end{equation*}
where $B_\sigma$ is the subset of $B$ with distance at least $\sigma$
from $\partial B$, holds. From these estimates, we can prove, as
$\mu\to0$, that $\um\to u$ in $L^p(\Omega_T)$ for every $T>0$ and
$1\le p<\infty$.

The problem of analyzing parabolic equations with singular diffusion
has of course been studied by several researchers, and the literature
is too comprehensive to be discussed in detail here.  Our paper relies
on the seminal paper by Carrillo \cite{MR1709116} where the
Kru\v{z}kov doubling of variables, see, e.g., \cite{MR3443431}, is
applied to study multi-dimensional degenerate parabolic problems, and
with an entropy condition due to Vol'pert--Hudjaev
\cite{MR0264232}. Carrillo studied the case of Dirichlet boundary
conditions.  An early result can be found in \cite{MR539218}.  Further
generalizations of the results by Carrillo can be found in \cite{MR2187640,
  MR2149515} by Chen and Karlsen, extending the analysis to the whole
space and allowing for spatial and temporal dependence the various
terms, as well as nonlinear transport. We have relied on the work by Karlsen and 
Ohlberger  \cite{MR1941794}, in particular their result regarding the weak chain rule, 
see  equation \eqref{eq:weakchain}.  Different boundary conditions
in one dimension are studied in \cite{MR1769093}. Karlsen and Risebro
\cite{MR1974417} studied uniqueness and stability of nonlinear
degenerate parabolic equations with rough coefficients. See also
\cite{Chen2001}.

In Section \ref{sec:num} we study a convergent difference scheme for
this equation in one dimension with $B=(0,1)$. Let $u^n_j$ be an approximation to
$u(t_n,x_j)$ with $t_n=n\Dt$ and $x_j=(j+1/2)\Dx$ for small positive numbers
$\Dt,\Dx$. The implicit first-order difference scheme is given by
\begin{equation*}
  \unpj - \mu \Dp\left(k_{j-1/2}\Dm G\left(\unpj\right)\right) = \unj, \ j=0,\ldots,N,
\end{equation*}
with boundary conditions that $\Dm u^{n+1}_0=\Dp u^n_N=0$. Here $\Dp$
($\Dm$) is the forward (backward) spatial difference,
$k_{j-1/2}=k(x_{j-1/2})$, and $\mu=\Dt/\Dx^2$ (assumed to be bounded
from below). We define the function $u_\Dt(t,x)$ on
$[0,\infty)\times B$ by making it equal to $\unj$ on the rectangle
$[t_n,t_{n+1})\times [x_{j-1/2},x_{j+1/2})$.  See \cite{MR3246807} for
related results on numerical methods.

Our interest in this equation stems from the modeling of multilane
dense traffic. Each lane is modeled by the traditional
Lighthill--Whitham--Richards model, which gives a scalar hyperbolic
conservation law where the unknown function describes the density of
vehicles. Our aim was to model unidirectional multilane traffic. We
discovered, see \cite{MR3721873, MR3845580, MR4001759}, that the model
we studied, allowed for an infinite number of lanes limit, resulting
in an equation resembling the one of this paper.

% \bigskip Some relevant references:
% \cite{MR2138986,MR1974417,MR2149515,MR1709116,MR1825698,MR2187640,MR3246807,
% MR1941794,MR3845580,MR3721873,MR4001759,MR3443431}.

\section{Space dependent singular diffusion}\label{sec:diffusion}
We are interested in the boundary value problem
\begin{equation}
  \label{eq:boundval}
  \begin{cases}  
    \pt u = \dv{k(x)\nabla G(u)},& (t,x)\in (0,\infty)\times B,\\
    (k(x)\nabla G(u))\cdot \nu = 0,& (t,x)\in (0,\infty)\times\partial B,\\
    u(0,x)=u_0(x),& x\in B,
  \end{cases}
\end{equation}
where we shall assume that
\begin{Assumptions}
\item \label{ass:B} $B\subset \R^d$ is a bounded open set with locally
  Lipchitz boundary and $\nu$ the unit outer normal to its boundary;
\item \label{ass:k} $k\in C^2(\overline{B};\R^{d\times d})$ is a
  diagonal matrix
  \begin{equation*}
    k(x)=\left(
      \begin{matrix}
        k_1(x)&\dots&0\\
        \vdots&\ddots&\vdots\\
        0&\dots&k_d(x)
      \end{matrix}
    \right),\qquad x\in \overline{B},
  \end{equation*}
  with $k_i\in C^2(\overline{B})$ and
  $k_i(x)>0$; %\marginlabel{$k'(0)=k'(1)=0$?????}
\item \label{ass:G} $G$ is Lipschitz continuous and nondecreasing;
\item \label{ass:init} $u_0\in W^{2,1}(B)\cap L^\infty(B)$.
\end{Assumptions}

We use the notation $\Omega=[0,\infty)\times B$ and
$\Omega_T=[0,T]\times B$.  It is also useful to define
\begin{equation*}
  h(x)=\left(
    \begin{matrix}
      h_1(x)&\dots&0\\
      \vdots&\ddots&\vdots\\
      0&\dots& h_d(x)
    \end{matrix}
  \right),\quad h_i(x)=\sqrt{k_i(x)},\quad  g(u)=\int^u \sqrt{G'(\xi)}\,d\xi.
\end{equation*}
\begin{definition}
  \label{def:weaksolk}
  A function $u\in C([0,\infty);L^1(B))$ is a weak solution of
  \eqref{eq:boundval} if
  \begin{align}
    \label{eq:visck}
    &u\in L^\infty(\Omega),\\
    \label{eq:visc1k}
    &\nabla G(u)\in L^2(\Omega_T;\R^d),\\
    \label{eq:visc2k}
    &(k(x)\nabla G(u))\cdot \nu = 0\> \text{in the sense of traces on $\partial B$ for a.e. $t$},
  \end{align}
  and for every test function $\test\in C^\infty_0(\Omega)$
  \begin{equation}
    \label{eq:defweak}
    \int_\Omega \big[u\pt\test- (k(x) \nabla G(u))\cdot\nabla\test\big] \,dtdx+\int_B u_0(x)\test(0,x) \,dx=0.
  \end{equation}
\end{definition}
\begin{definition}
  \label{def:entrsolk_new}
  A function $u\in C([0,\infty);L^1(B))$ is an entropy solution of
  \eqref{eq:boundval} if it is a weak solution of \eqref{eq:boundval}
  in the sense of Definition \ref{def:weaksolk} and for every convex
  entropy $\eta\in C^2(\R)$ and for all nonnegative test functions
  $\test\in C^\infty_0(\Omega)$
  \begin{align}
    \int_{\Omega_T} \big[\eta(u)\pt \test - &\eta'(u) (k(x) \nabla G(u)) \nabla\test\big] \,dxdt  \notag\\
                                            &\quad -\int_B \eta(u(T,x))\test(T,x)\,dx   + \int_B \eta(u_0(x))\test(0,x)\,dx      \label{eq:entrknew}  \\
                                            &\qquad\qquad\qquad\qquad\qquad  \ge \int_{\Omega_T} \eta''(u) \left|h(x)\nabla g(u)\right|^2 \test\,dxdt.\notag
  \end{align}
\end{definition}
Recall that the following weak chain rule holds, see \cite{MR1941794}.
Let $A$ be a continuous function on $[0,\infty)$ with $A(0)=0$ and $b$
a continuous function. If $u$ is an entropy solution in the sense of
Definition~\ref{def:entrsolk_new} then
\begin{equation}
  \label{eq:weakchain}
  b(u)\nabla\Bigl(\int^u A\left(\sqrt{G'(\xi)}\right)\,d\xi\Bigr)  =
  \nabla \Bigl(\int^u b(\xi)A\left(\sqrt{G'(\xi)}\right)\,d\xi\Bigr),
\end{equation}
weakly in $\Omega$. Then the following theorem holds.
\begin{theorem}[{\bf Uniqueness and Stability}]
  \label{thm:maink}
  Assume that \ref{ass:B}, \ref{ass:k}, \ref{ass:G}, and
  \ref{ass:init} hold. %Then the initial-boundary value problem
  %\eqref{eq:boundval} admits a unique entropy solution in the sense of
  %Definition~\ref{def:entrsolk_new}.  
  Let $u$ and $v$ be
  two weak entropy solutions of the  initial-boundary value problem \eqref{eq:boundval}.  Then
  \begin{equation}
    \label{eq:stabk}
    \norm{u(t,\dott)-v(t,\dott)}_{L^1(B)}\le \norm{u(0,\dott)-v(0,\dott)}_{L^1(B)}e^{Ct},
  \end{equation}
  for almost every $t\ge 0$. Here $C$ is a finite positive constant
  depending on $k$, $G$, and $B$.
\end{theorem}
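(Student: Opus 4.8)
The plan is to establish the $L^1$ stability estimate \eqref{eq:stabk} by Kru\v{z}kov's doubling of variables, in the form adapted to degenerate parabolic equations by Carrillo, augmented with the extra machinery needed to accommodate the bounded domain $B$ and the Neumann condition \eqref{eq:visc2k}. First I would write the entropy inequality \eqref{eq:entrknew} for $u$ in the variables $(t,x)$ and, independently, for $v$ in the variables $(s,y)$, approximating the Kru\v{z}kov entropy $\eta(\cdot)=|\cdot-c|$ by a family of smooth convex entropies $\eta_\delta$ with $\eta_\delta''\to 2\delta_0$; in the first inequality the constant $c$ is frozen at $v(s,y)$ and in the second at $u(t,x)$. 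Adding the two inequalities and integrating over all four variables produces a single inequality in which $\sgn{u-v}$ replaces the entropy derivative and in which a combination of the two parabolic dissipation measures $\eta_\delta''|h\nabla g|^2$ appears on the right-hand side.

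The test function is the standard Kru\v{z}kov product: an approximate Dirac delta $\omega_{r_0}(t-s)$ in time, an approximate Dirac delta $\prod_i\omega_r(x_i-y_i)$ in space, a smooth temporal profile, and---this is the new ingredient---a cut-off $\psi_\sigma(x)$ that vanishes in a $\sigma$-neighbourhood of $\partial B$. The purpose of $\psi_\sigma$ is to keep every integration by parts in the interior, so that the doubling generates no uncontrolled boundary contributions; the Neumann condition is then invoked to show that the genuine boundary flux of $k(x)\nabla G(u)$ against the surviving test function vanishes. After integrating by parts in $x$ and $y$, the diffusion terms split into a symmetric part that, together with the dissipation measures and the weak chain rule \eqref{eq:weakchain}, assembles into a quantity of definite (nonnegative) sign of the form $\int\eta_\delta''(u-v)\,|h\nabla g(u)-h\nabla g(v)|^2$ that can be discarded without violating the inequality, and a remainder carrying derivatives of the mollifiers and of $k$.

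I would then take the limits in a carefully ordered fashion. First send $r_0\to 0$ (the time singularity), turning the two time-derivative terms into $\partial_t\!\int|u-v|\,\psi_\sigma\,(\dots)$. Next send $\delta\to 0$, so that $\eta_\delta'(u-v)\to\sgn{u-v}$ and the nonnegative Carrillo term is legitimately dropped. The delicate point is the remaining spatial singularity $r$ together with the boundary cut-off $\sigma$: the remainder contains contributions supported where $x,y$ lie within $r$ of each other and within $\sigma$ of $\partial B$, and these must be shown to vanish. My expectation, matching the remark in the introduction, is that one cannot send $r\to0$ and $\sigma\to0$ independently; instead one couples them, $r=r(\sigma)$, and estimates the offending boundary-layer contribution, showing it tends to zero precisely when $r$ decays faster than $\sigma$. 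The surviving $\nabla k$ terms, controlled by $\norm{k}_{C^2}$ on $\overline{B}$ via \ref{ass:k}, produce the factor $C\int|u-v|$, and Gr\"onwall's inequality then yields \eqref{eq:stabk}.

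The main obstacle I anticipate is exactly this simultaneous limit: quantifying the boundary-layer error introduced by $\psi_\sigma$ and matching it against the spatial regularization $r$ so that the combined error vanishes in the joint limit, all while preserving the favorable sign of the Carrillo dissipation term. The remaining ingredients---the algebra of the doubling, the chain-rule reassembly of the dissipation, and the Gr\"onwall step---are by now standard; it is the coupled $r$--$\sigma$ estimate that genuinely goes beyond the classical Kru\v{z}kov--Carrillo framework and is where the bounded-domain geometry enters.
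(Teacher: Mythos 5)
Your proposal follows essentially the same route as the paper's proof: Kru\v{z}kov--Carrillo doubling with regularized sign entropies and the weak chain rule \eqref{eq:weakchain}, a test function built from time and space mollifiers times a smooth boundary cut-off, discarding the sign-definite dissipation square, killing the boundary flux term via the Neumann condition, and a coupled limit in which the spatial mollifier parameter vanishes no slower than the cut-off parameter (cf.~\eqref{eq:Cest}), followed by Gronwall. The differences are cosmetic --- the paper symmetrizes the cut-off as $\chi_{\eps_2}(x)\chi_{\eps_2}(y)$ and removes the sign regularization before the time singularity --- so this matches the paper's argument.
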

\begin{proof}
  We define the signum function as
  \begin{equation*}
    \sgn{\xi}=
    \begin{cases}
      1&\xi>0,\\ 0 &\xi=0,\\ -1 &\xi<0,
    \end{cases}
  \end{equation*}
  and its regularized version
  \begin{equation*}
    \signe\left(\xi\right)=
    \begin{cases}
      1&\xi>\eps,\\ \sin\bigl(\frac{\pi\xi}{2\eps}\bigr) &\abs{\xi}\le
      \eps,\\ -1 &\xi<-\eps.
    \end{cases}
  \end{equation*}
  Now set
  \begin{equation*}
    \abseps{\sigma}=\int_0^\sigma \signe(\xi)\,d\xi, \ \ \eta_\eps(u,v)=\abseps{u-v}.
  \end{equation*}
  Let now $u=u(t,x)$ and $v=v(s,y)$ be two entropy solutions with
  initial data $u_0$ and $v_0$, respectively. The maps
  $u\mapsto \eta_\eps(u,v)$ and $v\mapsto \eta_\eps(u,v)$ are
  admissible entropies. Let $\test=\test(t,x,s,y)$ be an admissible
  test function both in $(t,x)$ and in $(s,y)$.  By adding the entropy
  condition for $u$ and $v$ we get
  \begin{align}
    \int_{\Omega_T^2}\big[ &\abseps{u-v}\left(\pt + \ps\right)\test -
                             \signe(u-v) (k(x)\nabla_x G(u) )\cdot \nabla_x\test \notag\\
                           &\qquad\qquad\qquad\qquad\qquad\qquad+ \signe(u-v)(k(y)\nabla_y G(v))\cdot\nabla_y\test\big] \,dX \notag\\
                           &\quad - \int_{B\times\Omega_T}\!\!\!\!
                             \abseps{u-v}\test\,dxdyds\Bigm|_{t=0}^{t=T} - \int_{\Omega_T\times
                             B} \!\!\!\!
                             \abseps{u-v}\test\,dxdtdy\Bigm|_{s=0}^{s=T} \notag\\
    \ge& \int_{\Omega_T^2} \signe'(u-v)\left[\left|h(x) \nabla_x g(u)\right|^2 + \left|h(y)\nabla_y g(v)\right|^2\right]\test\,dX,
         \label{eq:en0}
  \end{align}
  where
  \begin{equation*}
    dX=dtdxdsdy,
  \end{equation*}
  with $dx=dx_1\dots dx_d$ and similarly for $dy=dy_1\dots dy_d$.
  Using the basic inequality $a^2+b^2 \ge \pm 2ab$, this can be
  rewritten
  \begin{equation}\label{eq:en1}\begin{split}
      \tau_\eps(\test) & + i_\eps(\test)-f_{\eps}(\test) - B_\eps(\test) \\
      & + \int_{\Omega_T^2}\signe(u-v)\big[(k(x)  \nabla_x G(u) )\cdot\nabla_y\test -   (k(y)\nabla_y G(v))\cdot\nabla_x \test\big] \,dX \\
      &\qquad\ge \pm 2 \int_{\Omega_T^2} \signe'(u-v)(h(x)\nabla_x
      g(u)) \cdot(h(y)\nabla_y g(v) ) \test \,dX,
    \end{split}\end{equation}
  where
  \begin{align*}
    \tau_\eps(\test)&=\int_{\Omega_T^2}\abseps{u-v}\left(\pt+\ps\right)\test\,dX,\\
    i_\eps(\test)&= \int_{B\times\Omega_T}\!\!\!\!
                   \abseps{u(0,x)-v(s,y)}\test(0,x,s,y) \,dxdsdy \\
                    &\qquad+
                      \int_{\Omega_T\times
                      B}\!\!\!\! \abseps{u(t,x)-v(0,y)} \test(t,x,0,y)\, dtdxdy,\\
    f_\eps(\test)&= \int_{B\times\Omega_T}\!\!\!\!
                   \abseps{u(T,x)-v(s,y)}\test(T,x,s,y) \,dxdsdy \\
                    &\qquad+
                      \int_{\Omega_T\times
                      B}\!\!\!\! \abseps{u(t,x)-v(T,y)} \test(t,x,T,y)\, dtdxdy,\\
    B_\eps(\test)&=\int_{\Omega_T^2} \signe(u-v)\left[ k(x)\nabla_x G(u)
                   - k(y) \nabla_y G(v)\right]\cdot\left(\nabla_x + \nabla_y\right)\test\,dX.
  \end{align*}
  By the weak chain rule \eqref{eq:weakchain}
  \begin{align*}
    \signe(u-v)\nabla_x G(u) 
    & = \signe(u-v) \nabla_x\Bigl(\int_v^u
      \gamma^2(\xi)\,d\xi
      \Bigr)\\
    &=\nabla_x\Bigl(\int_v^u \signe(\xi-v) \gamma^2(\xi)\,d\xi\Bigr),\\
    \signe(u-v)\nabla_y G(v) &= -\signe(u-v) \nabla_y\Bigl(\int_v^u
                               \gamma^2(\xi)\,d\xi \Bigr)\\
    &=-\nabla_y\Bigl(\int_v^u \signe(u-\xi)
      \gamma^2(\xi)\,d\xi\Bigr),
  \end{align*}
  where $\gamma(u)=g'(u)=\sqrt{G'(u)}$. Therefore, using the diagonal
  structure of $k$ and the fact that $\test=\test(t,x,s,y)$ vanishes
  for $x\in \partial B$ or $y\in\partial B$, we have
  \begin{align*}
    \int_{\Omega_T^2}&\signe(u-v)\big[(k(x)  \nabla_x G(u) )\cdot\nabla_y\test -   (k(y)\nabla_y G(v))\cdot\nabla_x \test\big] \,dX\\
                     &=\int_{\Omega_T^2}\big[(k(x)\nabla_y\test)\cdot(\signe(u-v)\nabla_x G(u))\\
                     &\qquad\qquad-(k(y)\nabla_x\test)\cdot (\signe(u-v)\nabla_y G(v))\big] \,dX\\
                     &=\int_{\Omega_T^2}\Bigl[(k(x)\nabla_y\test)\cdot\nabla_x\Bigl(\int_v^u \signe(\xi-v) \gamma^2(\xi)\,d\xi\Bigr)\\
                     &\qquad\qquad+(k(y)\nabla_x\test)\cdot \nabla_y\Bigl(\int_v^u \signe(u-\xi)
                       \gamma^2(\xi)\,d\xi\Bigr)\Bigr] \,dX\\
                     &=-\int_{\Omega_T^2}\Bigl[\dvxy{k(x)\test}\Bigl(\int_v^u \signe(\xi-v) \gamma^2(\xi)\,d\xi\Bigr)\\
                     &\qquad\qquad+\dvxy{k(y)\test}\Bigl(\int_v^u \signe(u-\xi)
                       \gamma^2(\xi)\,d\xi\Bigr)\Bigr] \,dX,\\
  \end{align*}
  where
  \begin{align*}
    \dvx{k(x)}&=\left(\begin{matrix}\partial_{x_1}{k_1(x)}\\\vdots\\\partial_{x_d}{k_d(x)}\end{matrix}\right),\\
    \dvxy{k(x)\test}&=%\dvy{\begin{matrix}\partial_{x_1}{k_1(x)\test}\\\vdots\\\partial_{x_d}{k_d(x)\test}\end{matrix}}
    \sum_{i=1}^d\partial_{x_i}\partial_{y_i}\big({k_i(x)\test}\big).
  \end{align*}
  Similarly,
  \begin{align*}
    \signe'(u-v)\nabla_x g(u)&=\signe'(u-v)\nabla_x\Bigl(\int_v^u
                               \gamma(\xi)\,d\xi\Bigr)\\
                             &=
                               \nabla_x\Bigl(\int_v^u \signe'(\xi-v) \gamma(\xi)\,d\xi\Bigr),  \\
    \signe'(\xi-v)\nabla_y g(v) &= \signe'(\xi-v)\nabla_y\Bigl(\int_\xi^v
                                  \gamma(\sigma)\,d\sigma\Bigr) \\ &= \nabla_y\Bigl(\int_\xi^v
                                                                     \signe'(\xi-\sigma) \gamma(\sigma)\,d\sigma\Bigr),
  \end{align*}
  so that
  \begin{equation*}
    \signe'(u-v)(\nabla_x g(u)\cdot\nabla_y g(v)) =
    \dvxy{\int_v^u\int_\xi^v \signe'(\xi-\sigma)
      \gamma(\xi)\gamma(\sigma)\,d\sigma d\xi}.
  \end{equation*}  
  Using the diagonal structure of $h$ and the fact that
  $\test=\test(t,x,s,y)$ vanishes for $x\in \partial B$ or
  $y\in\partial B$, we have
  \begin{align*}
    \int_{\Omega_T^2} &\signe'(u-v)(h(x)\nabla_x g(u)) \cdot(h(y)\nabla_y g(v) )\test \,dX\\
                      &=\int_{\Omega_T^2} \signe'(u-v)(h(x)h(y))(\nabla_x g(u) \cdot\nabla_y g(v) )\test \,dX\\
                      &=\int_{\Omega_T^2} \Bigl( \int_v^u\int_\xi^v \signe'(\xi-\sigma)
                        \gamma(\xi)\gamma(\sigma)\,d\sigma d\xi\Bigr)\dvxy{h(x)h(y)\test} \,dX.
  \end{align*}
  Then, \eqref{eq:en1} can be written
  \begin{equation} \label{eq:plusminus}
    \begin{split}
      \tau_\eps(\test)& + i_\eps(\test)-f_{\eps}(\test) - B_\eps(\test) \\
      &- \int_{\Omega_T^2} \Big[\Bigl(\int_v^u
      \signe(\xi-v)\gamma^2(\xi)\,d\xi\Bigr)
      \dvxy{k(x)\test} \\
      &+
      \Bigl(\int_v^u \signe(u-\xi)\gamma^2(\xi)\,d\xi\Bigr) \dvxy{(k(y)\test}\Big] \,dX  \\
       \ge& \pm \int_{\Omega_T^2} \Bigl( \int_v^u\int_\xi^v
      \signe'(\xi-\sigma) \gamma(\xi)\gamma(\sigma)\,d\sigma
      d\xi\Bigr) \dvxy{2h(x)h(y) \test} \,dX.
    \end{split}
  \end{equation}
  Next, we observe that
  \begin{equation*}
    \lim_{\eps\to 0} \int_\xi^v \signe'(\xi-\sigma) \gamma(\sigma)\, d\sigma = -
    \sgn{\xi-v} \gamma(\xi),
  \end{equation*}
  and consequently
  \begin{align*}
    \lim_{\eps\to 0}\int_v^u    \signe(\xi-v)\gamma^2(\xi)\,d\xi&=\Gamma(u,v),\\
    \lim_{\eps\to 0}\int_v^u \signe(u-\xi)\gamma^2(\xi)\,d\xi&=\Gamma(u,v),\\
    \lim_{\eps\to 0} \int_v^u \int_\xi^v \signe'(\xi-\sigma)    \gamma(\sigma)\gamma(\xi)\,d\sigma d\xi &=-\Gamma(u,v),
  \end{align*}
  where
  \begin{equation*}
    \Gamma(u,v):=  \int_v^u \sgn{\xi-v} \gamma^2(\xi)\,d\xi.
  \end{equation*}
  Choosing the plus sign in the above inequality \eqref{eq:plusminus}
  and using the definition of $h$, we get
  \begin{equation}\label{eq:goodestimate}\begin{split}
      \tau(\test) &+ i(\test)-f(\test) - B(\test)\\
      &\ge
      -\int_{\Omega_T^2}  \Gamma(u,v) \dvxy{\left(k(x)-2h(x)h(y)+k(y)\right)\test} \,dX\\
      &= -\int_{\Omega_T^2} \Gamma(u,v)
      \dvxy{\left(h(x)-h(y)\right)^2\test} \,dX=:- C(\test),
    \end{split} \end{equation} with
  \begin{align*}
    \tau(\test)&=\int_{\Omega_T^2} \abs{u-v}\left(\pt+\ps\right)\test
                 \,dX,\\
    i(\test)&= \int_{B\times\Omega_T}\!\!\!\!
              \abs{u_0-v}\test(0,x,s,y)\,dxdsdy \\
               &\quad+ \int_{\Omega_T\times
                 B}\!\!\!\!
                 \abs{u-v_0}\test(t,x,0,y)\,dtdxdy,\\
    f(\test)&= \int_{B\times\Omega_T}\!\!\!\!
              \abs{u(T,x)-v(s,y)}\test(T,x,s,y)\,dxdsdy \\
               &\quad+ \int_{\Omega_T\times
                 B}\!\!\!\!
                 \abs{u(t,x)-v(T,y)}\test(t,x,T,y)\,dtdxdy,\\
    B(\test)&=\int_{\Omega_T^2} \sgn{u-v}\left[ k(x)      \nabla_x G(u) - k(y)\nabla_y G(v)\right]\left(\nabla_x+\nabla_y\right)\test\,dX.
  \end{align*}
    
  The inequality \eqref{eq:goodestimate} implies the bound
  \eqref{eq:stabk} as we shall now demonstrate.  We choose a suitable
  test function $\test$. To this end let $\omega_\sigma(\xi)$ be a
  standard\footnote{We let
    $\omega_\sigma(\xi)=\frac1\sigma\omega(\frac\xi\sigma)$ where
    $\omega\colon\R\to [0,\infty)$, $\omega\in C^\infty$,
    $\supp(\omega)=[-1,1]$, $\int_\R \omega(x)dx=1$.} non-negative
  smooth mollifier with support inside $[-\sigma,\sigma]$. Recall that
  \begin{equation*}
    \abs{\omega_\sigma^{(k)}(\xi)}= \order{\sigma^{-(k+1)}}\ \text{and}\ 
    \int_\R \abs{\xi}^n \abs{\omega_\sigma^{(k)}(\xi)}\,d\xi = \order{\sigma^{n-k}},
  \end{equation*}
  for $k\ge0$ and $n\ge0$.  We then define
  \begin{equation}
    \label{eq:bsigma}
    B_{\sigma}=\{x\in B \mid\text{dist}(x,\partial B)\ge \sigma\}. 
  \end{equation}
  Let $\chi_\sigma$ is a $C^\infty$ function such that
  $\chi_\sigma(\xi)=0$ for $\xi\not\in {B}$, $\chi_\sigma(\xi)=1$ for
  $\xi\in B_\sigma$, $0\le \chi_\sigma(\xi)\le 1$.  Due to the
  smoothness of $\partial B$, we have that
  $\abs{\nabla\chi_\sigma(\xi)}\le C/\sigma$. Furthermore, the
  smoothness of $\partial B$ implies that we can choose $\chi_\sigma$
  such that for every smooth vector field $F$
  \begin{equation}
    \label{eq:F}
    \lim_{\sigma\to0}\int_B F\cdot \nabla \chi_\sigma dx= \int_{\partial B} F\cdot \nu ds.
  \end{equation} 
  
  Then we choose $\test_\eps$ as
  \begin{equation}\label{eq:testK}
    \test_\eps(t,x,s,y)=\omega_{\eps_0}(t-s)W_{\eps_1}(x-y)\chi_{\eps_2}(x)\chi_{\eps_2}(y),
  \end{equation}
  where
 $$\eps=(\eps_0,\eps_1,\eps_2), \qquad W_{\eps_1}(x-y)=\omega_{\eps_1}(x_1-y_1)\cdots \omega_{\eps_1}(x_d-y_d).$$
 With this choice
 \begin{align*}
   \left(\pt+\ps\right)\test_\eps &=0, \\
   \left(\nabla_x + \nabla_y\right)\test_\eps &= 
                                                \omega_{\eps_0}(t-s)W_{\eps_1}(x-y)\left(\nabla_x\chi_{\eps_2}(x)\chi_{\eps_2}(y)+
                                                \chi_{\eps_2}(x)\nabla_y\chi_{\eps_2}(y)\right),
 \end{align*}
 it is straightforward to show that
 \begin{align}
   \lim_{\eps\to 0} i(\test_\eps) &=
                                    \int_B\abs{u_0(x)-v_0(x)}\,dx, \label{eq:ilimit}\\
   \intertext{and} \lim_{\eps\to 0} f(\test_\eps) &=
                                                    \int_B\abs{u(T,x)-v(T,x)}\,dx. \label{eq:flimit}
 \end{align}  
 Next we claim that
 \begin{equation}
   \label{eq:b1limit}
   \lim_{\eps_2\to 0} \left(\lim_{(\eps_0,\eps_1)\to
       0}\abs{B(\test_\eps)}\right) = 0.
 \end{equation}
 To prove this claim first observe that
 \begin{align*}
   B(\test_\eps)&=\int_{\Omega_T^2} \sgn{u-v} \left(k(x)\nabla_x G(u) -
                  k(y)\nabla_y G(v)\right) \\
                &\hphantom{=\int_{\Omega_T^2} }\times \omega_{\eps_0}(t-s)W_{\eps_1}(x-y)
                  \left(\nabla_x\chi_{\eps_2}(x)\chi_{\eps_2}(y) +
                  \chi_{\eps_2}(x)\nabla_y\chi_{\eps_2}(y)\right)\,dX,
 \end{align*}
 and hence
 \begin{align*}
   \lim_{(\eps_0,\eps_1)\to 0}\abs{B(\test_\eps)} &\le
                                                    2\int_{\Omega_T} \abs{(k(x)\nabla_x G(u))\cdot\nabla_x\chi_{\eps_2}(x)}
                                                    \,dxdt \\
                                                  &\quad + 2\int_{\Omega_T} \abs{(k(y)\nabla_y G(v))\cdot\nabla_y\chi_{\eps_2}(y)}
                                                    \,dyds.
                                                    % \\
                                                    % &\le \frac{2}{\eps_2}\int_0^T\int_{B\setminus B_{\eps_2}} \abs{k(x)\nabla_x G(u)}\,dx   \,dt\\
                                                    % &\quad +
                                                    % \frac{2}{\eps_2}\int_0^T
                                                    % \int_{B\setminus
                                                    % B_{\eps_2}}
                                                    % \abs{k(y)\nabla_y
                                                    % G(v)}\,dy \,ds.
 \end{align*}
 Since both $u$ and $v$ satisfy the Neumann boundary conditions (see
 \eqref{eq:F})
 \begin{equation*}
   \lim_{\eps_2\to 0} \int_{B} \abs{(k(x)\nabla_x G(u))\cdot\nabla_x\chi_{\eps_2}(x)}\,dx =0\ \text{a.e.~$t$},
 \end{equation*}
 equation \eqref{eq:b1limit} holds.  Next we tackle the troublesome
 term, cf.~\eqref{eq:goodestimate},
 \begin{equation*}
   C(\test_\eps):=\int_{\Omega_T^2}  \Gamma(u,v)
   \dvxy{\left(h(x)-h(y)\right)^2\test_\eps} \,dX.
 \end{equation*}
 We start by computing
 \begin{align*}
   \dvxy{(h(x)-h(y))^2\test_\eps} &=\dvxy{\begin{matrix}
       (h_1(x)-h_1(y))^2\test_\eps&\cdots&0\\
       \vdots&\ddots&\vdots\\
       0&\cdots&(h_d(x)-h_d(y))^2  \test_\eps 
     \end{matrix}}\\
                                  &=\dvy{\begin{matrix}
                                      \partial_{x_1}  ( (h_1(x)-h_1(y))^2\test_\eps)\\
                                      \vdots\\
                                      \partial_{x_d}(
                                      (h_d(x)-h_d(y))^2 \test_\eps )
                                    \end{matrix}}    \\
                                  &=\sum_{i=1}^d\partial^2_{x_i y_i}( (h_i(x)-h_i(y))^2  \test_\eps )\\
                                  &=-2\sum_{i=1}^d\partial_{x_i}h_i(x)\partial_{y_i}h_i(y) \test_\eps \\
                                  &\quad+2\sum_{i=1}^d(h_i(x)-h_i(y))\partial_{x_i}h_i(x)\partial_{y_i}  \test_\eps \\
                                  &\quad-2\sum_{i=1}^d(h_i(x)-h_i(y))\partial_{y_i}h_i(y)\partial_{x_i}  \test_\eps \\
                                  &\quad+\sum_{i=1}^d(h_i(x)-h_i(y))^2
                                    \partial^2_{x_i y_i} \test_\eps.
 \end{align*}
 Also recall that
 \begin{equation*}
   \Gamma(u,v)=\int_v^u \sgn{\xi-v}
   G'(\xi)\,d\xi \le C\abs{u-v}.
 \end{equation*}
 Consequently
 \begin{align*}
   \abs{C(\test_\eps)}&\le C\underbrace{\sum_{i=1}^d\int_{\Omega_T^2}
                        \abs{u-v}\,\abs{\partial_{x_i}h_i(x)\partial_{y_i}h_i(y)} \test_\eps
                        \,dX}_{C_1(\test_\eps)}\\
                      &\quad + C\underbrace{\sum_{i=1}^d\int_{\Omega_T^2}
                        \abs{u-v}\,\abs{(h_i(x)-h_i(y))\partial_{x_i}h_i(x)}
                        \,\abs{\partial_{y_i}  \test_\eps}\,dX}_{C_2(\test_\eps)}\\
                      &\quad + C\underbrace{\sum_{i=1}^d\int_{\Omega_T^2} \abs{u-v}\,\abs{(h_i(x)-h_i(y))\partial_{y_i}h_i(y)}
                        \,\abs{\partial_{x_i}  \test_\eps}\,dX}_{C_3(\test_\eps)}\\
                      &\quad + C\underbrace{\sum_{i=1}^d\int_{\Omega_T^2} \abs{u-v}\,
                        \left(h_i(x)-h_i(y)\right)^2 \abs{\partial^2_{x_i y_i}\test_\eps}\,dX}_{C_4(\test_\eps)}.
 \end{align*}
 Below we shall repeatedly use that since $v$ is an entropy solution,
 $v$ has a modulus of continuity, i.e., there is a continuous function
 $\nu\colon[0,1]\to [0,\infty)$ such that $\nu(0)=0$ and
 \begin{equation*}
   \sup_{t\in [0,T]} \Bigl(\sup_{\abs{\delta}\le \sigma} \int_{B_\sigma}
   \abs{v(t,x)-v(t,x+\delta)}\,dx\Bigr) \le \nu(\sigma).
 \end{equation*}
 % In the above expression, we have defined $v(t,\xi)=v(t,0)$ if
 % $\xi<0$ and similarly $v(t,\xi)=v(t,1)$ if $\xi>1$.
 With our choice of test function it readily follows that
 \begin{align*}
   \lim_{\eps_0\to 0}\abs{C_1(\test_\eps)} &\le C\int_{\Omega_T}\int_B
                                             \abs{u-v} W_{\eps_1}(x-y)\,dydxdt\\
                                           &\le
                                             C\int_{\Omega_T}\abs{u(t,x)-v(t,x)}\int_{|y-x|<\eps_1}W_{\eps_1}(x-y)\,dydxdt \\
                                           &\quad + C\int_{\Omega_T}\int_{|y-x|<\eps_1}
                                             \abs{v(t,x)-v(t,y)}W_{\eps_1}(x-y)\,dydxdt.  
 \end{align*}
 Thus
 \begin{equation}
   \label{eq:C1limit}
   \lim_{\eps_0\to 0} \abs{C_1(\test_\eps)} \le C\Bigl(
   \int_{\Omega_T}\abs{u(t,x)-v(t,x)}\,dxdt+\nu(\eps_1)\Bigr).
 \end{equation}
 To estimate $C_2(\test_\eps)$ we observe that
 \begin{equation*}
   \partial_{y_i}\test_\eps = \omega_{\eps_0}(t-s)\left[
     -\partial_{y_i}W_{\eps_1}(x-y)\chi_{\eps_2}(x)\chi_{\eps_2}(y) +
     W_{\eps_1}(x-y)\chi_{\eps_2}(x)\partial_{y_i}\chi_{\eps_2}(y)\right], 
 \end{equation*}
 and we split
 $C_2(\test_\eps)=C_{2,1}(\test_\eps)+C_{2,2}(\test_\eps)$
 accordingly.  Then
 \begin{align*}
   \lim_{\eps_0\to 0} &\abs{C_{2,1}(\test_\eps)}\\
                      &\le \sum_{i=1}^d \int_{\Omega_T}\int_B  \abs{u(t,x)-v(t,y)}\,\abs{h_i(x)-h_i(y)}\\
                      &\qquad\qquad \times    \abs{\partial_{x_i}h_i(x)} \abs{\partial_{y_i}W_{\eps_1}(x-y)}\,
                        \chi_{\eps_2}(x)\chi_{\eps_2}(y)\,dydxdt \\
                      &\le C \int_{\Omega_T}\int_B  \abs{u(t,x)-v(t,y)}\,\abs{x-y} \abs{\nabla_{y}W_{\eps_1}(x-y)}\,dydxdt \\
                      &\le C\int_{\Omega_T}\abs{u(t,x)-v(t,x)} \int_B
                        \abs{x-y}\,\abs{\nabla_{y}W_{\eps_1}(x-y)}\,dydxdt \\
                      &\quad + C\int_{\Omega_T}\int_{|y-x|<\eps_1} \abs{v(t,x)-v(t,y)}
                        \,\abs{x-y}\,\abs{\nabla_{y}W_{\eps_1}(x-y)} \,dydxdt\\
                      &\le C\int_{\Omega_T}\abs{u(t,x)-v(t,x)} \,dxdt 
                        +C\nu(\eps_1).
 \end{align*}
 Similarly we find that
 \begin{align*}
   \lim_{\eps_0\to 0}& \abs{C_{2,2}(\test_\eps)} \\
                     &\le\sum_{i=1}^d 
                       \int_{\Omega_T}\int_B
                       \abs{u(t,x)-v(t,y)}\,\abs{h_i(x)-h_i(y)} \\
                     &\qquad\qquad\times\abs{\partial_{x_i}h(x)}W_{\eps_1}(x-y)
                       \chi_{\eps_2}(x)\abs{\partial_{y_i}\chi_{\eps_2}(y)}\,dydxdt \\
                     &\le C\int_{\Omega_T} \abs{u(t,x)-v(t,x)}\frac{1}{\eps_2}\int_{|y-x|<\eps_1}\abs{x-y}W_{\eps_1}(x-y)\,dydxdt\\
                     & \quad+ \frac{C}{\eps_2}
                       \int_{\Omega_T}\int_{|y-x|<\eps_1}
                       \abs{v(t,x)-v(t,y)}\,\abs{x-y} W_{\eps_1}(x-y)\,dydxdt\\
                     &\le C\frac{\eps_1}{\eps_2} \left(\int_{\Omega_T}
                       \abs{u(t,x)-v(t,x)}\,dxdt + \nu(\eps_1)\right).
 \end{align*}
 Therefore we have that
 \begin{equation}
   \label{eq:C2limit}
   \lim_{\eps_0\to 0}\abs{C_2(\test_\eps)} \le 
   C\left(1+\frac{\eps_1}{\eps_2}\right) \Bigl(
   \int_{\Omega_T}\abs{u(t,x)-v(t,x)}\,dxdt + \nu(\eps_1)\Bigr).
 \end{equation}
 The term $C_3(\test_\eps)$ can similarly be bounded as
 \begin{equation}
   \label{eq:C3limit}
   \lim_{\eps_0\to 0}\abs{C_3(\test_\eps)} \le 
   C\left(1+\frac{\eps_1}{\eps_2}\right) \Bigl(
   \int_{\Omega_T}\abs{u(t,x)-v(t,x)}\,dxdt + \nu(\eps_1)\Bigr).
 \end{equation}
 To estimate the final term $C_4(\test_\eps)$ we first note that
 \begin{align*}
   \partial^2_{x_iy_i}\test_\eps=
   \omega_{\eps_0}(t-s)\Bigl[
   & -\partial^2_{x_iy_i}W_{\eps_1}(x-y)\chi_{\eps_2}(x)\chi_{\eps_2}(y)\\
   &\quad+\partial_{x_i}W_{\eps_1}(x-y)\chi_{\eps_2}(x)\partial_{y_i}\chi_{\eps_2}(y)\\
   &\quad -\partial_{y_i}W_{\eps_1}(x-y)\partial_{x_i}\chi_{\eps_2}(x)\chi_{\eps_2}(y)\\
   &\quad +W_{\eps_1}(x-y)\partial_{x_i}\chi_{\eps_2}(x)\partial_{y_i}\chi_{\eps_2}(y)\Bigr],
 \end{align*}
 and we split
 $C_4(\test_\eps)=C_{4,1}(\test_\eps)+C_{4,2}(\test_\eps)+C_{4,3}(\test_\eps)+C_{4,4}(\test_\eps)$
 accordingly.  These terms are estimated as follows:
 \begin{align*}
   \lim_{\eps_0\to 0}&\abs{C_{4,1}(\test_\eps)}\\
                     &\le\sum_{i=1}^d    \int_{\Omega_T}\int_B \abs{u-v} \left(h_i(x)-h_i(y)\right)^2
                       \abs{\partial^2_{x_iy_i}W_{\eps_1}(x-y)}\chi_{\eps_2}(x)\chi_{\eps_2}(y)\,dydxdt\\
                     &\le C\sum_{i=1}^d    \int_{\Omega_T}\int_B \abs{u-v} \left|x-y\right|^2
                       \abs{\partial^2_{x_iy_i}W_{\eps_1}(x-y)}\,dydxdt\\
                     &\le C\sum_{i=1}^d    \int_{\Omega_T}    \abs{u(t,x)-v(t,x)}\int_{|y-x|<\eps_1}
                       |x-y|^2\abs{\partial^2_{x_iy_i}W_{\eps_1}(x-y)}\,dydxdt\\
                     & \quad+\sum_{i=1}^d \int_{\Omega_T}\int_{|y-x|<\eps_1}
                       \abs{v(t,x)-v(t,y)} |x-y|^2\abs{\partial^2_{x_iy_i}W_{\eps_1}(x-y)}\,dy
                       dxdt\\
                     &\le C\int_{\Omega_T}\abs{u(t,x)-v(t,x)}\,dxdt + C\nu(\eps_1), \\
 \end{align*}
 and
 \begin{align*}
   \lim_{\eps_0\to 0}&\abs{C_{4,2}(\test_\eps)} \\
                     &\le\sum_{i=1}^d
                       \int_{\Omega_T}\int_B \abs{u-v} \left(h_i(x)-h_i(y)\right)^2
                       \abs{\partial_{x_i}W_{\eps_1}(x-y)}\chi_{\eps_2}(x)\abs{\partial_{y_i}\chi_{\eps_2}(y)}\,dydxdt\\
                     &\le C\sum_{i=1}^d  \int_{\Omega_T} \abs{u(t,x)-v(t,x)} \frac{1}{\eps_2}
                       \int_{|y-x|<\eps_1} |x-y|^2\abs{\partial_{x_i}W_{\eps_1}(x-y)}\,dydxdt\\
                     &\quad + C\sum_{i=1}^d\int_{\Omega_T}\abs{v(t,x)-v(t,y)} \frac{1}{\eps_2}\int_{|y-x|<\eps_1}
                       |x-y|^2\abs{\partial_{x_i}W_{\eps_1}(x-y)}\,dydxdt\\
                     &\le C\frac{\eps_1}{\eps_2}\left(\int_{\Omega_T} \abs{u(t,x)-v(t,x)}\,dxdt + \nu(\eps_1)\right). 
 \end{align*}
 The term $C_{4,3}(\test_\eps)$ is similar,
 \begin{equation*}
   \lim_{\eps_0\to 0}\abs{C_{4,3}(\test_\eps)}\le C\frac{\eps_1}{\eps_2}\left(\int_{\Omega_T}
     \abs{u(t,x)-v(t,x)}\,dxdt + \nu(\eps_1)\right).
 \end{equation*}
 It remains the term $C_{4,4}(\test_\eps)$,
 \begin{align*}
   \lim_{\eps_0\to 0}&\abs{C_{4,4}(\test_\eps)} \\
                     &\le\sum_{i=1}^d
                       \int_{\Omega_T}\int_B \abs{u-v} \left(h_i(x)-h_i(y)\right)^2
                       W_{\eps_1}(x-y)\abs{\partial_{x_i}\chi_{\eps_2}(x)\partial_{y_i}\chi_{\eps_2}(y)}\,dydxdt\\
                     &\le C
                       \int_{\Omega_T}\int_B \abs{u-v}  |x-y|^2
                       W_{\eps_1}(x-y)\frac{1}{\eps_2^2}\,dydxdt\\
                     &\le C
                       \int_{\Omega_T} \abs{u(t,x)-v(t,x)} \frac{1}{\eps_2^2}
                       \int_{|y-x|<\eps_1}  |x-y|^2 W_{\eps_1}(x-y)\,dydxdt\\
                     & \quad+ C\int_{\Omega_T}\abs{v(t,x)-v(t,y)} \frac{1}{\eps_2^2}\int_{|y-x|<\eps_1}
                       |x-y|^2 W_{\eps_1}(x-y)\,dydxdt\\
                     &\le C\frac{\eps_1^2}{\eps_2^2}\left(\int_{\Omega_T}
                       \abs{u(t,x)-v(t,x)}\,dxdt + \nu(\eps_1)\right).
 \end{align*}
 Then we can conclude that
 \begin{equation}
   \label{eq:C4limit}
   \lim_{\eps_0\to 0}\abs{C_4(\test_\eps)} \le
   C\Bigl(1+\frac{\eps_1}{\eps_2}+\bigl(\frac{\eps_1}{\eps_2}\bigr)^2\Bigr)\Bigl(
   \int_{\Omega_T}\abs{u(t,x)-v(t,x)}\,dxdt + \nu(\eps_1)\Bigr).
 \end{equation}
 Using \eqref{eq:C1limit}--\eqref{eq:C4limit} we find
 \begin{equation}
   \lim_{\eps_0\to 0}\abs{C(\test_\eps)} \le 
   C\Bigl(1+\frac{\eps_1}{\eps_2}+\bigl(\frac{\eps_1}{\eps_2}\bigr)^2\Bigr)\Bigl(
   \int_{\Omega_T}\abs{u(t,x)-v(t,x)}\,dxdt + \nu(\eps_1)\Bigr), \label{eq:Cest}
 \end{equation}
 and hence we can send $\eps_1$ and $\eps_2$ to zero in such a manner
 that the quotient $\eps_1/\eps_2$ remains finite to conclude that
 \begin{equation}
   \label{eq:c5limit}
   \lim_{\eps\to 0} \abs{C(\test_\eps)}\le C \int_0^T\int_B \abs{u(t,x)-v(t,x)}\,dxdt.
 \end{equation}
 Taking the limit $\eps\to 0$ in \eqref{eq:goodestimate}, using
 \eqref{eq:ilimit}, \eqref{eq:flimit}, \eqref{eq:b1limit} and
 \eqref{eq:c5limit} we get
 \begin{equation*}
   \int_B\abs{u(T,x)-v(T,x)}\,dx\le \int_B\abs{u_0(x)-v_0(x)}\,dx
   + C\int_0^T \int_B \abs{u(t,x)-v(t,x)}\,dxdt.
 \end{equation*}
 The inequality \eqref{eq:stabk} then follows by Gronwall's lemma.
\end{proof}
\begin{remark}\normalfont If $u$ is an entropy solution, then
  $u(t,x)\ge \underline{u}$ a.e.~$(t,x)$ for some finite constant
  $\underline{u}$.  Since
  $\nabla G(u)=\nabla\left(G(u)-G(\underline{u})\right)$, there is no
  loss of generality in assuming $G(\underline{u})=0$. Then define
  \begin{equation*}
    \frak{G}(u)=\int_{\underline{u}}^u G(\sigma)\,d\sigma.
  \end{equation*}
  The $\frak{G}(u(t,x))\ge 0$ for almost all $(t,x)$, and
  \begin{equation*}
    \frac{d}{dt}\int_B \frak{G}(u(t,x))\,dx = -\int_B (k(x)\nabla G(u(t,x)))\cdot \nabla G(u(t,x))\,dx(\le0).
  \end{equation*}
  Since $k_i\ge \underline{k}>0$ this means that
  \begin{align*}
    \int_{\Omega_T} \left|\nabla G(u(t,x))\right|^2\,dxdt &\le \frac{1}{\underline{k}}\int_B
                                                            \frak{G}(u_0(x)) - \frak{G}(u(T,x))\,dx \\&\le  \frac{1}{\underline{k}}\int_B\frak{G}(u_0(x))\,dx<\infty.
  \end{align*}
  We then claim that an entropy solution satisfies the following
  regularity estimate
  \begin{equation}
    \label{eq:Gregular}
    \int_0^T \abs{G(u(t,x))-G(u(t,y))}\,dt \le CT\sqrt{\abs{x-y}},
  \end{equation}
  for some constant $C$ which depends only on $u_0$ and $T$, and for
  almost all $x$ and $y$ in $B$. To substantiate this claim we
  calculate
  \begin{align*}
    \int_0^T&\abs{G(u(t,x))-G(u(t,y))}\,dt \\
            &\le   \int_0^T \Bigl| \int_0^1 \nabla G(u(t,\theta y+(1-\theta)x))\cdot(y-x)\,d\theta\Bigr|\,dt\\
            &\le \sqrt{\abs{x-y}}\, T^{1/2}\Bigl(\int_{\Omega_T} \left|\nabla G(u(t,z))\right|^2\,dzdt\Bigr)^{1/2}.
  \end{align*}
\end{remark}

\section{Existence of a solution}
\label{sec:exist}

Given $\mu>0$, let $\um$ be the unique classical solution of the
initial-boundary value problem \cite[Theorem 1.7.8]{MPS}
\begin{equation}
  \label{eq:regular}
  \begin{cases}
    \pt \um = \dv{k(x)\nabla G(\um)}+\mu\Delta \um,&\quad t>0,\> x\in B,\\
    (k(x)\nabla G(\um(t,x))+\mu\nabla \um(t,x))\cdot\nu=0,&\quad \ t>0,\> x\in\partial B,\\
    \um(0,x)=u_{0}(x),&\quad x\in B.
  \end{cases}
\end{equation}

The main result of this section is the following.

\begin{theorem}[{\bf Existence}]
  \label{th:existence}
  Assume that \ref{ass:B}, \ref{ass:k}, \ref{ass:G}, \ref{ass:init}
  hold and that every $k_i$ depends only on $x_i$.  Then there exists
  an entropy solution $u$ in the sense of
  Definition~\ref{def:entrsolk_new} to the the initial-boundary value
  problem \eqref{eq:boundval}. In particular, we have, as $\mu\to 0$,
  that
  \begin{equation}
    \label{eq:thex}
    \um\to u\quad\text{a.e. and in $L^p(\Omega_T)$ for every $T>0$ and $1\le p<\infty$.}
  \end{equation}
\end{theorem}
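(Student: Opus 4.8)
The plan is to obtain the entropy solution as the vanishing-viscosity limit of the regularized problems \eqref{eq:regular}, whose smooth solutions $\um$ exist by \cite[Theorem 1.7.8]{MPS}. The whole argument rests on a priori bounds that are \emph{uniform in $\mu$}; compactness then delivers a limit, and a passage to the limit identifies it as an entropy solution. All the estimates are read off from the smooth entropy identity obtained by multiplying \eqref{eq:regular} by $\eta'(\um)$ and integrating by parts, the Neumann condition killing the boundary flux:
\begin{equation*}
  \frac{d}{dt}\int_B \eta(\um)\,dx + \int_B \eta''(\um)\,\abs{h(x)\nabla g(\um)}^2\,dx + \mu\int_B \eta''(\um)\,\abs{\nabla \um}^2\,dx = 0,
\end{equation*}
valid for every convex $\eta\in C^2$.

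First I would extract the basic uniform bounds. Taking $\eta$ to be a maximum-principle entropy (a smooth approximation of the distance to the range of $u_0$) gives $\underline u\le \um\le \ov u$ uniformly in $\mu$, hence uniform $L^\infty(\Omega)$ and $L^p$ control. Taking $\eta=\mathfrak G$ with $\mathfrak G'=G$ (as in the Remark following Theorem~\ref{thm:maink}) turns the identity into an energy balance whose dissipation is exactly $\int_B (k(x)\nabla G(\um))\cdot\nabla G(\um)$; since $k_i\ge\underline k>0$ this yields
\begin{equation*}
  \norm{\nabla G(\um)}_{L^2(\Omega_T)}\le C,\qquad \sqrt{\mu}\,\norm{\nabla \um}_{L^2(\Omega_T)}\le C,
\end{equation*}
both independent of $\mu$, the second making the artificial viscosity $\mu\Delta\um$ negligible in the limit.

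The decisive estimate is spatial compactness, and this is the step that forces the hypothesis $k_i=k_i(x_i)$. Differentiating \eqref{eq:regular} in $x_j$ and writing $w=\partial_{x_j}\um$, one obtains a linear parabolic equation for $w$. The point is that $\partial_{x_j}(k_i(x_i)\,\cdot)$ produces a coefficient $k_i'(x_i)$ \emph{only} when $i=j$; all cross terms $\partial_{x_j}k_i$ with $i\ne j$ vanish, so the equation for $w$ closes with its principal part in divergence form. Multiplying by $\signe(w)$, integrating over the interior region $B_\sigma$ of \eqref{eq:bsigma}, and sending $\eps\to0$, the diffusive principal part contributes a nonpositive term (the usual parabolic $L^1$ contraction) while the remaining lower-order coefficients produce a Gronwall factor; the boundary layer $B\setminus B_\sigma$, where the induced boundary condition for $w$ is not controlled, is excised by the cut-off and accounts for the $\sigma$-dependence of the constant. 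This gives, uniformly in $\mu$,
\begin{equation*}
  \norm{\partial_{x_j}\um(t)}_{L^1(B_\sigma)}\le \norm{\partial_{x_j}u_0}_{L^1(B)}\,e^{C_\sigma t},
\end{equation*}
a uniform variation bound on the interior.

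With these estimates I would conclude by a Kolmogorov--Riesz argument. The interior variation bound yields a uniform spatial translation estimate on each $B_\sigma$, while the boundary strip $B\setminus B_\sigma$ has measure $\to0$ as $\sigma\to0$ and is controlled by the uniform $L^\infty$ bound; temporal equicontinuity in $L^1$ follows from \eqref{eq:regular} tested against spatial mollifications, using the uniform bounds on $\nabla G(\um)$ and $\sqrt\mu\,\nabla\um$. Hence a subsequence satisfies $\um\to u$ a.e.\ and in $L^p(\Omega_T)$, $1\le p<\infty$, which is \eqref{eq:thex}. To identify $u$: the uniform $L^2$ bound with a.e.\ convergence gives $\nabla G(\um)\weak \nabla G(u)$ in $L^2(\Omega_T)$, the viscous term $\mu\nabla\um\to0$ in $L^2$, and passing to the limit in \eqref{eq:defweak} makes $u$ a weak solution with the flux regularity \eqref{eq:visc1k} and the trace condition \eqref{eq:visc2k} surviving the limit (the latter aided by the regularity \eqref{eq:Gregular}). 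Finally, in the smooth entropy identity the left-hand side converges by a.e.\ convergence and weak convergence of the flux, the nonnegative viscous dissipation $\mu\int\eta''(\um)\abs{\nabla\um}^2\test$ is discarded, and the entropy dissipation is handled by weak lower semicontinuity, since $\sqrt{\eta''(\um)}\,h\nabla g(\um)\weak \sqrt{\eta''(u)}\,h\nabla g(u)$ and the $L^2$ norm is weakly lower semicontinuous, yielding \eqref{eq:entrknew} in the correct direction. The hard part throughout is the interior $L^1$ variation estimate: it is exactly where the separable structure $k_i=k_i(x_i)$ is indispensable, and where the interplay between the boundary cut-off and the absence of a clean Neumann condition for the differentiated equation must be managed.
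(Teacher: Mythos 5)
Your strategy is the same as the paper's: vanishing viscosity via \eqref{eq:regular}, the smooth entropy identity for the uniform $L^\infty$, $L^2$, and $\norm{\nabla G(\um)}_{L^2(\Omega_T)}$ bounds, the interior estimate $\norm{\partial_{x_j}\um(t,\dott)}_{L^1(B_\sigma)}\le \norm{\partial_{x_j}u_0}_{L^1(B)}e^{C_\sigma t}$ (where $k_i=k_i(x_i)$ is indeed what makes the differentiated equation close), Kolmogorov--Riesz compactness with a diagonal argument in $\sigma$, and weak $L^2$ lower semicontinuity for the entropy dissipation. Most of this matches Lemma~\ref{lm:est1}, Lemma~\ref{lm:BVx}, and the paper's limit passage.

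There is, however, a genuine gap at the Neumann trace condition \eqref{eq:visc2k}. By choosing to obtain temporal $L^1$ equicontinuity from the equation tested against spatial mollifications (a legitimate alternative for compactness), you never derive the estimate $\norm{\pt\um(t,\dott)}_{L^1(B)}\le C\norm{u_0}_{W^{2,1}(B)}$ --- the paper's \eqref{eq:u_tBV}, obtained by differentiating \eqref{eq:regular} in $t$, and the precise place where the hypothesis $u_0\in W^{2,1}(B)$ is used. That estimate does double duty: besides time compactness, it yields $\dv{k(x)\nabla G(u)}=\pt u\in L^\infty((0,\infty);L^1(B))$ for the limit (cf.~\eqref{eq:divul1}), which makes $k\nabla G(u)$ a divergence-measure field; only then does \cite[Theorem 2.1]{Chen2001} guarantee that the normal trace $(k\nabla G(u))\cdot\nu$ exists on $(0,\infty)\times\partial B$ and vanishes. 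Without such divergence control, $\nabla G(\um)\weak\nabla G(u)$ only weakly in $L^2$, and the normal trace of a general $L^2$ vector field is neither well defined nor weakly continuous, so the assertion that \eqref{eq:visc2k} ``survives the limit'' has no content. Your appeal to \eqref{eq:Gregular} cannot fill this hole: that estimate is an interior modulus of continuity for $G(u)$ itself and says nothing about the normal component of the flux at $\partial B$. A secondary, easily repaired omission: what you prove is convergence of a subsequence, which is not yet \eqref{eq:thex}; convergence of the whole family $\mu\to0$ requires invoking uniqueness (Theorem~\ref{thm:maink}), as the paper does in its final line.
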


First observe that by the Neumann boundary conditions, the solution
operator is conservative, i.e.,
\begin{equation*}
  \frac{d}{dt} \int_B \um(t,x)\,dx = 0.
\end{equation*}

\begin{lemma}
  \label{lm:est1}
  We have that
  \begin{subequations}
  \begin{align}  
    \norm{\um(T,\dott)}_{L^2(B)}^2 + \int_{\Omega_T} 2 G(\um)|h(x)\nabla\um|^2\,dxdt & \notag\\
     +2\mu \int_{\Omega_T} |\nabla\um|^2\,dxdt   &= \norm{u_{0}}_{L^2(B)}^2, \label{eq:L2epsbnd}\\[1mm]
    \label{eq:Linftyepsbnd}
    \inf_{\bar x\in B}u_0(\bar x)\le \um(t,x)&\le \sup_{\bar x\in B} u_0(\bar x), \quad (t,x)\in\Omega\\[1mm]
    \label{eq:L1epsbnd}
    \norm{\um(t,\dott)}_{L^1(B)} &\le \norm{u_0}_{L^1(B)},\\[1mm]
    \label{eq:Gepsxbnd}
    \norm{\nabla G(\um)}_{L^2(\Omega_T)} &\le \frac{\norm{G(u_0)}_{L^\infty(B)}\norm{u_0}_{L^1(B)}}{\sqrt{\min\limits_{i,\, x\in B} k_i(x_i)}},\\[1mm]
    \label{eq:u_tBV}
    \norm{\pt\um(t,\dott)}_{L^1(B)}&\le C\norm{u_0}_{W^{2,1}(B)},\\[1mm]
    \label{eq:divul1}
    \norm{\dv{k\nabla G(\um)}(t,\dott)}_{L^1(B)}&\le C\norm{u_0}_{W^{2,1}(B)}, 
  \end{align}
  \end{subequations}
  for every $\mu>0$ and $t\in[0,\infty)$.
\end{lemma}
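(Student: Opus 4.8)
The plan is to derive all six estimates from a single master identity: the entropy balance for the smooth solution $\um$ applied to carefully chosen convex entropies $\eta$. Since $\um$ is a classical solution of the regularized problem \eqref{eq:regular}, I can multiply the equation by $\eta'(\um)$, integrate over $B$, and integrate by parts, using the Neumann condition $(k\nabla G(\um)+\mu\nabla\um)\cdot\nu=0$ to kill the boundary terms. This produces the balance
\begin{equation*}
  \frac{d}{dt}\int_B \eta(\um)\,dx + \int_B \eta''(\um)G'(\um)|h(x)\nabla\um|^2\,dx + \mu\int_B \eta''(\um)|\nabla\um|^2\,dx = 0,
\end{equation*}
valid for every convex $\eta\in C^2$, exactly as in \eqref{eq:smooth_entropy}. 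Here I have used $k(x)\nabla G(\um)\cdot\nabla\um = G'(\um)|h(x)\nabla\um|^2$, which follows from the diagonal structure of $k=h^2$ and the chain rule $\nabla G(\um)=G'(\um)\nabla\um$. Each desired estimate then comes from specializing $\eta$.

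Carrying this out term by term: choosing $\eta(u)=u^2$ gives $\eta''=2$, and integrating the balance in $t$ from $0$ to $T$ yields \eqref{eq:L2epsbnd} directly, the three nonnegative terms summing to $\norm{u_0}_{L^2(B)}^2$. For the $L^\infty$ bound \eqref{eq:Linftyepsbnd}, I would use a standard maximum-principle argument: take $\eta$ a smooth convex approximation of $(u-M)_+^2$ with $M=\sup u_0$; since $\eta(\um(0,\cdot))=0$ and the dissipation terms are nonnegative, $\int_B\eta(\um(t))\,dx$ stays zero, forcing $\um\le M$, and symmetrically for the lower bound. The $L^1$ contraction/conservation \eqref{eq:L1epsbnd} follows from conservativity together with the comparison structure, or more directly by testing against $\signe(\um)$-type entropies. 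For \eqref{eq:Gepsxbnd}, I would start from the dissipation term $\int_{\Omega_T}G'(\um)|h(x)\nabla\um|^2\,dxdt = \int_{\Omega_T}k(x)\nabla G(\um)\cdot\nabla\um\,dxdt$, bound $|\nabla G(\um)|^2\le \frac{1}{\min k_i}\,k(x)\nabla G(\um)\cdot\nabla\um$ pointwise (again using $k$ diagonal positive), and control the right-hand side by $\norm{G(u_0)}_{L^\infty}\norm{u_0}_{L^1}$ via the $\frak G$-functional already introduced in the Remark.

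The two genuinely different estimates are the spatial-regularity bounds \eqref{eq:u_tBV} and \eqref{eq:divul1}. For these I would differentiate the regularized equation \eqref{eq:regular} in a spatial direction $x_j$, obtaining an equation for $w:=\partial_{x_j}\um$, and then estimate $\norm{w(t)}_{L^1(B)}$ by testing against $\signe(w)$ and sending $\eps\to0$; the diffusion $-\dv{k\nabla G(\um)}$ differentiated produces terms of the form $\partial_{x_j}(k)\,\partial\cdot$ which, being lower order, contribute a Gronwall constant $C=C(k)$ and thus the exponential factor $e^{C_\sigma t}$ mentioned in the introduction. Because $\pt\um = \dv{k\nabla G(\um)}+\mu\Delta\um$, controlling $\norm{\dv{k\nabla G(\um)}}_{L^1}$ and $\norm{\pt\um}_{L^1}$ amounts to controlling spatial second derivatives of $\um$ in $L^1$, which is why the hypothesis $u_0\in W^{2,1}(B)$ enters. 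I expect the boundary terms in this differentiated $L^1$ estimate to be the main obstacle: differentiating the Neumann condition does not simply reproduce a Neumann condition for $w$, so one must either work on $B_\sigma$ away from $\partial B$ (as the $\norm{\partial_{x_j}\um}_{L^1(B_\sigma)}$ estimate in the introduction suggests) or argue that the boundary contributions have a favorable sign. Establishing the $\mu$-independence of the constant $C$ throughout is essential, since these bounds must survive the passage $\mu\to0$ in Theorem~\ref{th:existence}.
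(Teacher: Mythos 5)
Your treatment of the first four estimates follows the paper's route: derive the entropy balance \eqref{eq:smooth_entropy} by multiplying \eqref{eq:regular} by $\eta'(\um)$ and using the Neumann condition, then specialize $\eta$ (quadratic for \eqref{eq:L2epsbnd}, approximations of $(u-c)^{\pm}$ for \eqref{eq:Linftyepsbnd}--\eqref{eq:L1epsbnd}, and the primitive $\eta(u)=\int_0^u G(v)\,dv$, i.e.\ your $\frak{G}$-functional, for \eqref{eq:Gepsxbnd}). One detail in \eqref{eq:Gepsxbnd} is off: the claimed pointwise bound $|\nabla G(\um)|^2\le\frac{1}{\min_i k_i}\,k(x)\nabla G(\um)\cdot\nabla\um$ is false in general, since $|\nabla G(\um)|^2=(G'(\um))^2|\nabla\um|^2$ carries an extra factor $G'(\um)$ (it holds only up to the Lipschitz constant of $G$); moreover the $\frak{G}$-balance controls $\int_B k\nabla G(\um)\cdot\nabla G(\um)\,dx$, not $\int_B k\nabla G(\um)\cdot\nabla\um\,dx$. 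Both issues disappear if you use $\eta=\frak{G}$ directly, whose dissipation term is exactly $\int_B|h(x)\nabla G(\um)|^2\,dx\ge \min_i k_i\int_B|\nabla G(\um)|^2\,dx$, which is what the paper does.

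The genuine gap is in \eqref{eq:u_tBV} and \eqref{eq:divul1}. You propose differentiating \eqref{eq:regular} in a spatial direction and running an $L^1$ estimate for $w=\partial_{x_j}\um$. As you yourself observe, $w$ does not inherit a Neumann condition, and the two escape routes you mention do not yield the stated bounds: retreating to $B_\sigma$ gives precisely Lemma \ref{lm:BVx}, an interior estimate $\norm{\partial_{x_j}\um(t,\dott)}_{L^1(B_\sigma)}\le\norm{\partial_{x_j}u_0}_{L^1(B)}e^{C_\sigma t}$ whose constant is not controlled as $\sigma\to0$, whereas \eqref{eq:u_tBV}--\eqref{eq:divul1} are global on $B$, uniform in $t$, with no exponential factor; and no sign argument for the boundary terms is available. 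The idea missing from your proposal is to differentiate in \emph{time} rather than space: since $\partial B$ is fixed in time, differentiating the boundary condition in $t$ yields $(k(x)\nabla\pt G(\um)+\mu\nabla\pt\um)\cdot\nu=0$ on $\partial B$, so $\pt\um$ satisfies the same Neumann-type structure. The $L^1$-contraction argument then runs on all of $B$: multiplying the time-differentiated equation by $\sgn{\pt\um}$ and integrating by parts, the boundary term vanishes, the quadratic terms $-\int_B\big(G'(\um)(k\nabla\pt\um)\cdot\nabla\pt\um+\mu|\nabla\pt\um|^2\big)\sgnP{\pt\um}\,dx$ are nonpositive, and the $G''$-term vanishes because it contains the factor $\pt\um\,\sgnP{\pt\um}$. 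Hence $\frac{d}{dt}\int_B|\pt\um|\,dx\le0$, so $\norm{\pt\um(t,\dott)}_{L^1(B)}\le\norm{\pt\um(0,\dott)}_{L^1(B)}\le C\norm{u_0}_{W^{2,1}(B)}$, and \eqref{eq:divul1} is then read off from the equation. Your suggestion to reach $\pt\um$ through second spatial derivatives fails for the same reason: those cannot be controlled in $L^1$ up to the boundary, uniformly in $\mu$, by the spatial route.
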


\begin{proof}
  If we multiply with $\eta'(\um)$ where $\eta$ is a smooth convex
  function (entropy), using that $\um$ is a classical smooth solution,
  we get
  \begin{equation}
    \label{eq:entrmu}
    \begin{split}
      \pt \eta(\um) &= \eta'(\um)\dv{k(x) \nabla G(\um)}+\mu \eta'(\um)\Delta \um\\
      % = &\dv{G'(\um)\eta'(\um)k(x)\nabla\um} +\mu \Delta\eta(\um)\\
      % &\quad -   \eta^{\prime\prime}(\um) G'(\um)(k(x)\nabla\um)\cdot\nabla\um- \mu  \eta^{\prime\prime}(\um)|\nabla\um|^2\\
      &=\dv{\eta'(\um)\left(k(x)\nabla G(\um) +\mu \nabla\um\right)}\\
      &\quad - \eta^{\prime\prime}(\um) G'(\um)|h(x)\nabla\um|^2- \mu
      \eta^{\prime\prime}(\um)|\nabla\um|^2.
    \end{split}
  \end{equation}
  Integrating over $B$ and using the boundary conditions we get
  \begin{equation}\label{eq:smooth_entropy}
    \frac{d}{dt}\int_B \eta(\um)\,dx + \int_B  \eta^{\prime\prime}(\um)G'(\um)|h(x)\nabla\um|^2  \,dx 
    + \mu\int_B  \eta^{\prime\prime}(\um)|\nabla\um|^2  \,dx   = 0.
  \end{equation}
  Choosing $\eta(u)=\frac12 u^2$ we get \eqref{eq:L2epsbnd}.  By an
  approximation argument, we can choose $\eta(u)=(u-c)^{\pm}$ where
  $c$ is a constant, to get
  \begin{equation*}
    \int_B (\um(t,x)-c)^{\pm} \,dx \le \int_B \left(u_0-c\right)^{\pm}\,dx.
  \end{equation*}
  This implies the bound \eqref{eq:Linftyepsbnd}.  Setting $c=0$ and
  adding the $(u)^+$ and $(u)^-$ inequalities we get
  \eqref{eq:L1epsbnd}.

  Without loss of generality, we can assume that $G(0)=0.$ If
  necessary by an approximation argument, we can use the entropy
  \begin{equation*}
    \eta(u)=\int_0^u G(v)\,dv.
  \end{equation*}
  Note that $\eta(u)\ge 0$.  Since $\eta'(u)=G(u)$ we get the bound
  \begin{equation*}
    \frac{d}{dt}\int_B \eta(\um)\,dx + \int_B  |h(x)\nabla   G(\um)|^2 \,dx \le 0. 
  \end{equation*}
  This means that $\nabla G(\um)$ is uniformly bounded in
  $L^2(\Omega_T)$ and \eqref{eq:Gepsxbnd} holds.

  Since, from \eqref{eq:regular},
  \begin{equation*}
    (k(x)\nabla \pt G(\um(t,x))+\mu \nabla \pt\um(t,x))\cdot\nu=0,\quad \ t>0,\> x\in\partial B,
  \end{equation*}
  differentiating the equation in \eqref{eq:regular} we gain
  \begin{align*}
    \frac{d}{dt}\int_B |\pt\um|dx&=\int_B \ptt \um \sgn{\pt\um}dx\\
                &=\int_B \dv{(k(x)\nabla \pt G(\um))+\mu \nabla \pt\um}\sgn{\pt\um}dx\\
                % &\quad +\int_B \dv{G''(\um)\pt\um (k(x)\nabla \um)}\sgn{\pt\um}dx+\mu\int_B \Delta \pt\um \sgn{\pt\um}dx\\
                &=\underbrace{-\int_B \Big(G'(\um)(k(x)\nabla \pt\um)\cdot\nabla\pt \um +\mu|\nabla\pt \um|^2\Big)\text{sign}'(\pt\um)dx}_{\le0}\\
                &\quad\underbrace{-\int_B G''(\um)\pt\um (k(x)\nabla \um)\cdot\nabla \pt\um\text{sign}'(\pt\um)dx}_{=0}\\
                &\quad\underbrace{+\int_{\partial B} (k(x)\nabla \pt G(\um)+\mu\nabla\pt\um)\cdot\nu \sgn{\pt\um}ds.}_{=0}
  \end{align*}
  The middle term disappears as the integrand contains a term of the
  form $\psi\, \text{sign}'(\psi)$ (with $\psi=\pt\um$).  Integrating
  over $(0,t)$ we get \eqref{eq:u_tBV} and from the equation
  \eqref{eq:divul1}.
\end{proof}

\begin{lemma}
  \label{lm:BVx}
  For every $\sigma>0$ there exists a constant $C_\sigma>0$
  independent of $\mu$ such that
  \begin{equation}
    \label{eq:BVx}
    \norm{\partial_{x_j} \um(t,\dott)}_{L^1(B_\sigma)}\le \norm{\partial_{x_j} u_0}_{L^1(B)}e^{ C_\sigma t},
  \end{equation}
  for every $\mu>0$, $t\ge0$, and $j\in\{1,\dots,d\}$, where
  $B_\sigma$ is defined in \eqref{eq:bsigma}.
\end{lemma}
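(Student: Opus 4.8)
The plan is to propagate the spatial bound exactly as the time bound \eqref{eq:u_tBV} was obtained: differentiate the regularized problem \eqref{eq:regular} in the direction $x_j$, set $w=\partial_{x_j}\um$, and test the resulting equation against $\signe(w)$. Since $\um$ is a smooth classical solution for fixed $\mu>0$, differentiation is legitimate and yields
\begin{equation*}
  \pt w = \partial_{x_j}\!\bigl[k_j'(x_j)G'(\um)w\bigr] + \sum_{i=1}^d\partial_{x_i}\!\bigl[k_i\,\partial_{x_i}(G'(\um)w)\bigr] + \mu\Delta w .
\end{equation*}
The decisive structural fact is that the hypothesis $k_i=k_i(x_i)$ gives $\partial_{x_j}k_i=\delta_{ij}k_j'$, so the only coefficient term produced is the single \emph{divergence-form} term $\partial_{x_j}[k_j'G'(\um)w]$; were $k_i$ allowed to depend on the other variables, there would appear non-divergence terms $\partial_{x_i}[(\partial_{x_j}k_i)\partial_{x_i}G(\um)]$ with $i\neq j$, which carry no sign and resist control. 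Because $\partial_{x_j}\um$ does \emph{not} inherit a Neumann condition, I cannot test over all of $B$; instead I multiply by $\signe(w)\,\chi_\sigma$ with the interior cut-off $\chi_\sigma$ of \eqref{eq:bsigma}, which is compactly supported in $B$ and hence produces no boundary contributions.

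After integrating by parts, the contributions carrying the factor $\signe'(w)$ that come from genuine diffusion, namely $k_iG'(\um)(\partial_{x_i}w)^2$ and $\mu|\nabla w|^2$, are nonnegative and are discarded since they enter with a favourable sign. Every remaining $\signe'(w)$ term carries an extra factor $w$; these I kill in the limit $\eps\to0$ by writing $w\,\signe'(w)\,\partial_{x_i}w=\partial_{x_i}\Phi_\eps(w)$ with $\Phi_\eps(w)=\int_0^w\xi\,\signe'(\xi)\,d\xi=\order{\eps}$ and integrating by parts once more (for fixed $\mu$ the coefficient produced is a bounded smooth function, so the whole term is $\order{\eps}$). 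The interior part, where $\chi_\sigma\equiv1$, then \emph{cancels}: writing $\sgn{w}\,\partial_{x_j}(G'(\um)w)=\partial_{x_j}\abs{\partial_{x_j}G(\um)}$ and integrating by parts in $x_j$, the two bulk pieces $\pm\int_B k_j''\abs{\partial_{x_j}G(\um)}\chi_\sigma\,dx$ annihilate, once again because $k_j$ depends on $x_j$ alone.

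What survives is supported on the annulus $\{\nabla\chi_\sigma\neq0\}\subset B\setminus B_\sigma$, and this is the crux. Using the weak chain rule \eqref{eq:weakchain} to replace $\sgn{w}\,\partial_{x_i}(G'(\um)w)$ by $\partial_{x_i}\abs{\partial_{x_j}G(\um)}$ (the two sides differing only on $\{G'(\um)=0\}$, where $\partial_{x_j}G(\um)=0$) and performing a last integration by parts onto $\chi_\sigma$, these terms are bounded, after $\eps\to0$, by
\begin{equation*}
  \frac{C}{\sigma^2}\int_{B\setminus B_\sigma}\abs{\partial_{x_j}G(\um)}\,dx + \mu\,\frac{C}{\sigma^2}\int_{B\setminus B_\sigma}\abs{w}\,dx ,
\end{equation*}
where $\abs{\nabla\chi_\sigma}=\order{1/\sigma}$ and $\abs{\Delta\chi_\sigma}=\order{1/\sigma^2}$ are used. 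Since $G$ is Lipschitz with constant $L$, one has $\abs{\partial_{x_j}G(\um)}=G'(\um)\abs{w}\le L\abs{w}$, so the first term is comparable to $\int\abs{w}$; together with the $\mu$-independent bound \eqref{eq:Gepsxbnd} on $\norm{\nabla G(\um)}_{L^2(\Omega_T)}$ and the bound $\mu\int_{\Omega_T}\abs{\nabla\um}^2\le C$ from \eqref{eq:L2epsbnd}, the right-hand side is absorbed into a closed differential inequality for $\int_B\abs{w}\chi_\sigma\,dx$ with constant $C_\sigma$. Gronwall's inequality then gives \eqref{eq:BVx}.

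I expect the main obstacle to be precisely this annulus contribution. Because $\partial_{x_j}\um$ satisfies no boundary condition, the cut-off cannot be removed, and its derivatives force the constant $C_\sigma=\order{\sigma^{-2}}$ that blows up as $\sigma\to0$; this is exactly why the estimate is only interior. The delicate point is to arrange the boundary terms---via the weak-chain-rule substitution and the Lipschitz bound $\abs{\partial_{x_j}G(\um)}\le L\abs{w}$---so that they feed back into a closed Gronwall inequality rather than leaving an uncontrolled remainder, the uniform $L^2$-control of $\nabla G(\um)$ from Lemma \ref{lm:est1} being what makes this possible. It is also here that the hypothesis $k_i=k_i(x_i)$ is indispensable, since it is what produces the interior cancellation and keeps the only $x$-gradient of $k$ aligned with the differentiation direction.
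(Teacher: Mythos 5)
Your strategy coincides with the paper's up to the decisive step: differentiate \eqref{eq:regular} in $x_j$, test against $\signe(\partial_{x_j}\um)$ times an interior cut-off, use $\partial_{x_j}k_i=\delta_{ij}k_j'$ so that the only coefficient term is in divergence form, discard the sign-definite $\signe'$ terms, and remove the remaining $\signe'$ terms via the factor $w\,\signe'(w)=\order{\eps}$. Your ``interior cancellation'' of the two $k_j''$ pieces is also equivalent to what the paper does, which keeps only the annulus term $-\int_B(\partial_{x_j}k_j)G'(\um)\abs{\partial_{x_j}\um}\,\partial_{x_j}\chi\,dx$ (and, incidentally, you do not need the weak chain rule \eqref{eq:weakchain} for any of this: $\um$ is a classical solution for fixed $\mu>0$, so the ordinary chain rule applies).

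The gap is in how the cut-off error terms are closed. With a standard cut-off ($\abs{\nabla\chi_\sigma}=\order{1/\sigma}$, $\abs{\Delta\chi_\sigma}=\order{1/\sigma^2}$), the surviving terms live on the annulus where $\chi_\sigma<1$, indeed where $\chi_\sigma$ may vanish, so they are \emph{not} dominated by $C_\sigma\int_B\abs{w}\chi_\sigma\,dx$: no Gronwall absorption is possible, and your claim that the right-hand side ``is absorbed into a closed differential inequality'' is precisely the step that fails. Estimating those terms by the space-time bounds \eqref{eq:Gepsxbnd} and \eqref{eq:L2epsbnd}, as you propose, only works after integrating in time, and Cauchy--Schwarz then yields an additive error: the best available conclusion along your route is
\begin{equation*}
  \norm{\partial_{x_j}\um(t,\dott)}_{L^1(B_\sigma)}\le \norm{\partial_{x_j}u_0}_{L^1(B)}e^{C_\sigma t}
  + C_\sigma\sqrt{t}\,\bigl(1+\sqrt{\mu}\bigr)e^{C_\sigma t},
\end{equation*}
which is uniform in $\mu$ (and would in fact still suffice for the compactness argument in Theorem \ref{th:existence}), but is strictly weaker than the stated estimate \eqref{eq:BVx}, which carries no additive term. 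The paper closes the loop by a different device, which is the idea missing from your proof: it chooses the cut-off $\chi$ so that its derivatives are dominated by the cut-off itself, $\abs{\partial_{x_i}\chi}\le c(\sigma)\chi$ and $\abs{\partial^2_{x_ix_j}\chi}\le c(\sigma)\chi$. With that choice every surviving term---including the viscous one $\mu\int_B\abs{w}\,\Delta\chi\,dx$---is bounded by $c(\sigma)\int_B\abs{w}\chi\,dx$, the differential inequality genuinely closes, and Gronwall gives the pure multiplicative bound \eqref{eq:BVx}.
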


\begin{proof}
  Define
  \begin{equation*}
    \vm_i=\partial_{x_i}\um.
  \end{equation*}
  The equation \eqref{eq:regular} reads
  \begin{equation*}
    \pt \um = \sum_{i=1}^d\partial_{x_i}\Big( k_i(x_i)G'(\um)\vm_i\Big)+\mu\Delta \um.
  \end{equation*}
  Differentiating with respect to $x_j$ we get
  \begin{align*}
    \pt \vm_j &= \sum_{i=1}^d\partial_{x_i}\Big( k_i(x_i)G'(\um)\partial_{x_i}\vm_j\Big)
                +\sum_{i=1}^d\partial_{x_i}\Big( k_i(x_i)G''(\um)\vm_i\vm_j\Big)\\
              &\quad +\sum_{i=1}^d\partial_{x_i}\Big((\partial_{x_j} k_i(x_i))G'(\um)\vm_i\Big) +\mu\Delta \vm_j.
  \end{align*}
  Let $\chi$ be a cut-off function such that
  \begin{align*}
   & \chi\in C^\infty(\R^d),\qquad 0\le\chi\le1,\qquad\chi(x)=\begin{cases}
      1,&\text{if $x\in B_\sigma$},\\
      0,&\text{if $x\not\in B_{\frac{\sigma}{2}}$},
    \end{cases}\\
  &  \left|\partial^2_{x_i x_j}\chi\right|\le c(\sigma)\chi,\quad\left|\partial_{x_i}\chi\right|\le c(\sigma)\chi.
  \end{align*}
  Using the facts
  \begin{equation*}
    \chi\big|_{\partial B}=\partial_{x_i}\chi\big|_{\partial B}=0,\qquad i\not =j\Rightarrow \partial_{x_j}k_i=0,
  \end{equation*}
  we have that
  \begin{align*}
    \frac{d}{dt}\int_{B}|\vm_j|\chi dx &=\int_{B}\pt \vm_j \sign{\vm_j}\chi dx\\
                                       &= \sum_{i=1}^d\int_{B}\partial_{x_i}\Big( k_i(x_i)G'(\um)\partial_{x_i}\vm_j\Big)\sgn{\vm_j}\chi dx\\
                                       &\quad +\sum_{i=1}^d\int_{B}\partial_{x_i}\Big( k_i(x_i)G''(\um)\vm_i\vm_j\Big)\sgn{\vm_j}\chi dx\\
                                       &\quad +\int_{B}\partial_{x_j}\Big((\partial_{x_j} k_j(x))G'(\um)\vm_j\Big)\sgn{\vm_j}\chi dx\\
                                       &\quad +\mu\int_{B}\Delta \vm_j\sgn{\vm_j}\chi dx\\
                                       &=\underbrace{- \sum_{i=1}^d\int_{B} k_i(x_i)G'(\um)(\partial_{x_i}\vm_j)^2\text{sign}'(\vm_j)\chi dx}_{\le0}\\
                                       &\quad -\sum_{i=1}^d\int_{B} k_i(x_i)G'(\um)\partial_{x_i}|\vm_j|\partial_{x_i}\chi dx\\
                                       &\quad\underbrace{-\sum_{i=1}^d\int_{B}\partial_{x_i}k_i(x_i)G''(\um)\vm_i\vm_j(\partial_{x_i}\vm_j)\text{sign}'(\vm_j)\chi dx}_{=0}\\
                                       &\quad -\sum_{i=1}^d\int_{B}k_i(x_i)G''(\um)\vm_i|\vm_j|\partial_{x_i}\chi dx\\
                                       &\quad\underbrace{-\int_{B}(\partial_{x_j} k_j(x))G'(\um)\vm_j(\partial_{x_j}\vm_j)\text{sign}'(\vm_j)\chi dx}_{=0}\\
                                       &\quad -\int_{B}(\partial_{x_j} k_j(x))G'(\um)|\vm_j|\partial_{x_j}\chi dx\\
                                       &\quad \underbrace{-\mu\int_{B}|\nabla \vm_j|^2\text{sign}'(\vm_j)\chi dx}_{=0}
                                         +\mu\int_{B} |\vm_j|\Delta\chi dx \\
                                       &\le -\sum_{i=1}^d\int_{B} k_i(x_i)\partial_{x_i}\Big(G'(\um)|\vm_j|\Big)\partial_{x_i}\chi dx\\
                                       &\quad -\int_{B}(\partial_{x_j} k_j(x))G'(\um)|\vm_j|\partial_{x_j}\chi dx  +\mu\int_{B} |\vm_j|\Delta\chi dx\\
                                       &= \sum_{i=1}^d\int_{B} (\partial_{x_i}k_i(x_i))G'(\um)|\vm_j|\partial_{x_i}\chi dx\\
                                       &\quad +\sum_{i=1}^d\int_{B} k_i(x_i)G'(\um)|\vm_j|\partial_{x_i x_i}^2\chi dx\\
                                       &\quad -\int_{B}(\partial_{x_j} k_j(x))G'(\um)|\vm_j|\partial_{x_j}\chi dx  +\mu\int_{B} |\vm_j|\Delta\chi dx\\
                                       &\le c(\sigma)\int_B|\vm_j|\chi dx.
  \end{align*}
  The term $\int_{B}|\nabla \vm_j|^2\text{sign}'(\vm_j)\chi dx$
  vanishes due to \cite[Lemma B.5]{MR3443431}.  The Gronwall lemma
  gives the claim.
\end{proof}

\begin{proof}[Proof of Theorem \ref{th:existence}]
  We claim that there exists a function $u\in L^\infty(\Omega)$ and a
  subsequence $\{\mu_{\ell}\}_{\ell},\,\mu_{\ell}\to0,$ such that
  \begin{equation}
    \label{eq:ex1}
    u^{\mu_{\ell}}\to u,\qquad \text{a.e. and in $L^p(\Omega_T)$ for every $T>0$ and $1\le p<\infty$.}
  \end{equation}
  Consider the sequence $\{\um \chi_{B_\sigma}\}_{\mu}$. Thanks to
  Lemmas \ref{lm:est1} and \ref{lm:BVx} for every $\sigma$ we can find
  a sequence $\{\mu_{\ell}^\sigma\}_{\ell},\,\mu_{\ell}^\sigma\to0,$
  and a function $u_\sigma\in BV(\Omega_T)$ such that
  \begin{equation*}
    u^{\mu^\sigma_{\ell}}\to u_\sigma,\quad \text{a.e. and in $L^p((0,T)\times B_\sigma)$ for every $T>0$ and $1\le p<\infty$.}
  \end{equation*}
  Since
  \begin{equation*}
    u^{\mu^\sigma_{\ell}}=u^{\mu^{\sigma'}_{\ell}}\quad \text{in $B_\sigma$, if $\sigma'<\sigma$.}
  \end{equation*}
  The function $u_\sigma$ can be extended to a function
  $u\in L^\infty(\Omega)$ such that
  \begin{equation*}
    u_\sigma=u\quad \text{in $B_\sigma$.}
  \end{equation*}
  and using a diagonal argument we can find a subsequence
  $\{\mu_{\ell}\}_{\ell},\,\mu_{\ell}\to0,$ such that \eqref{eq:ex1}
  holds.

  The dominated convergence theorem, Lemmas \ref{lm:est1},
  \ref{lm:BVx}, \eqref{eq:entrmu}, and \eqref{eq:ex1} guarantee that
  \eqref{eq:visck}, \eqref{eq:visc1k}, \eqref{eq:defweak},
  \eqref{eq:entrknew} hold. Regarding \eqref{eq:visc2k}, we observe
  that thanks to \eqref{eq:u_tBV}
  \begin{equation*}
    \dv{k(x)\nabla G(u)}=\pt u \in L^\infty((0,\infty);L^1(B)).
  \end{equation*}
  In light of \cite[Theorem 2.1]{Chen2001}, we have that
  $(k\nabla G(u))\cdot\nu$ admits trace on
  $(0,\infty)\times\partial B$ and \eqref{eq:visc2k} holds.

  Finally, we can improve the convergence along a subsequence in
  \eqref{eq:ex1} to the one along all the family in \eqref{eq:thex}
  due to the uniqueness of the entropy solutions.
\end{proof}

\section{Convergence of a difference scheme in one space dimension}
\label{sec:num}

We now consider the problem in one space dimension with $B=(0,1)$. Thus
\begin{equation*}
\pt u = \big(k(x)\px G(u)\big)_x, \quad u|_{t=0}=u_0,
\end{equation*}
with Neumann boundary conditions $\big(k(x)\px G(u)\big)|_{x=0, 1} = 0$.

Let $\Dx=1/(N+1)$ for some positive integer $N$. We use the notation
\begin{equation*}
  \Delta_{\pm}a_j=\pm\left(a_{j\pm 1}-a_j\right),
\end{equation*}
and $x_j = (j+1/2)\Dx$, $t_n=n\Dt$, and $k_{j+1/2}=k(x_{j+1/2})$ for
(small) positive numbers $\Dx$ and $\Dt$.  With the shift operator
\begin{equation*}
  (S^\pm a)_j=a_{j\pm1},
\end{equation*}
we can write
\begin{equation*}
  S^\pm  \Delta_{\mp}=\Delta_{\pm}.
\end{equation*}
We use the common convention that $u^n_j$ is an approximation to
$u(t_n,x_j)$. Let $u^n_j$, $j=0,\ldots,N$, $n\ge 0$ be the solution to
the following system of equations
\begin{equation}
  \label{eq:implicit}
  \unpj - \mu \Dp\left(k_{j-1/2}\Dm G\left(\unpj\right)\right) = \unj, \ j=0,\ldots,N,
\end{equation}
with the boundary conditions that $\Dm u^{n+1}_0=\Dp u^n_N=0$. Here
$\mu=\Dt/\Dx^2$ (which is only assumed to be bounded from below). As
to the initial condition we define
\begin{equation}
  \label{eq:diff_initialcond}
  u^0_j=\frac{1}{\Dx} \int_{x_{j-1/2}}^{x_{j+1/2}} u_0(x)\,dx,\ \ j=0,\ldots,N.
\end{equation}
For later use we also define the function $u_{\Dt}\colon [0,\infty)\times B\to \R$ as
\begin{equation}
  \label{eq:diff_uDt}
  u_\Dt(t,x)=u^n_j \ \text{for}\ (t,x)\in [t_n,t_{n+1})\times [x_{j-1/2},x_{j+1/2}).
\end{equation}
Regarding the solvability of \eqref{eq:implicit}, we have the
following result.
% ------------- lemma
\begin{lemma} For any given $u^n=(u^n_0,\dots,u^n_N)\in\R^{N+1}$, the equation
  \eqref{eq:implicit} has a unique solution $u^{n+1}\in\R^{N+1}$.
\end{lemma}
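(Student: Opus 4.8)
The plan is to recast \eqref{eq:implicit} as a single fixed-point equation $F(w)=u^n$ on $\R^{N+1}$ and to exploit the monotone no-flux structure of the discrete diffusion operator. Writing $w=(w_0,\dots,w_N)$ and encoding the boundary conditions through the ghost values $w_{-1}=w_0$, $w_{N+1}=w_N$ (so that the discrete fluxes $\kjm\Dm G(w_j)$ vanish at both endpoints), set
\begin{equation*}
  F(w)_j = w_j - \mu\,\Dp\bigl(\kjm\Dm G(w_j)\bigr), \qquad A(w)=F(w)-w.
\end{equation*}
Since $G$ is Lipschitz by \ref{ass:G}, the map $F$ is continuous, and solving \eqref{eq:implicit} is exactly finding $w$ with $F(w)=u^n$. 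The workhorse is discrete summation by parts: using that the boundary fluxes vanish, for any $\phi\in\R^{N+1}$,
\begin{equation*}
  \sum_{j=0}^N A(w)_j\,\phi_j = \mu\sum_{j=0}^{N-1}\kjp\bigl(G(w_{j+1})-G(w_j)\bigr)\bigl(\phi_{j+1}-\phi_j\bigr).
\end{equation*}

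Two consequences follow. Choosing $\phi=w$ and using $\kjp>0$ with the monotonicity of $G$, every summand is nonnegative, so $\sum_{j}A(w)_j w_j\ge 0$. Choosing instead $\phi=r:=G(w)-G(\tilde w)$ for two competitors $w,\tilde w$ gives
\begin{equation*}
  \sum_{j=0}^N\bigl(A(w)_j-A(\tilde w)_j\bigr)\,r_j = \mu\sum_{j=0}^{N-1}\kjp\,(r_{j+1}-r_j)^2\ge 0.
\end{equation*}
For uniqueness, suppose $F(w)=F(\tilde w)=u^n$. Subtracting and pairing with $r$ yields
\begin{equation*}
  \sum_{j=0}^N (w_j-\tilde w_j)\bigl(G(w_j)-G(\tilde w_j)\bigr) + \mu\sum_{j=0}^{N-1}\kjp\,(r_{j+1}-r_j)^2 = 0.
\end{equation*}
Both sums are nonnegative, hence zero: the second forces $r$ constant in $j$, and since each summand of the first is nonnegative it too vanishes termwise, forcing that constant to be $0$; thus $G(w_j)=G(\tilde w_j)$ for all $j$. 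Then $A(w)=A(\tilde w)$, and $w-\tilde w=-(A(w)-A(\tilde w))=0$ gives $w=\tilde w$.

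For existence I would invoke the standard topological corollary of Brouwer's theorem: a continuous map $\Phi\colon\R^{N+1}\to\R^{N+1}$ with $\langle\Phi(w),w\rangle>0$ on a sphere $\abs{w}=R$ possesses a zero in the open ball $\abs{w}<R$. Take $\Phi(w)=F(w)-u^n$; coercivity comes from the identity part alone, since by $\sum_j A(w)_j w_j\ge 0$ and Cauchy--Schwarz,
\begin{equation*}
  \langle\Phi(w),w\rangle = \abs{w}^2 + \sum_{j=0}^N A(w)_j w_j - \langle u^n,w\rangle \ge \abs{w}^2 - \abs{u^n}\,\abs{w},
\end{equation*}
which is positive once $\abs{w}=R>\abs{u^n}$. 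This furnishes a solution, and uniqueness was shown above.

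The main obstacle is the degeneracy of the diffusion: because $G$ is only nondecreasing and $G'$ may vanish, $A$ is not strictly monotone and $F$ is not strictly monotone when tested against $w-\tilde w$. The device that resolves this is to test the difference of the two equations against $G(w)-G(\tilde w)$ instead; this isolates a genuinely positive diffusion contribution (bounded below via $\min_j\kjp>0$), pins down $G(w)=G(\tilde w)$, and only then does one recover $w=\tilde w$ from the nondegenerate identity term. A secondary point requiring care is the bookkeeping of the ghost values at $j-1/2=-1/2$ and $j+1/2=N+1/2$, ensuring the boundary fluxes vanish so that the summation by parts leaves no boundary contributions.
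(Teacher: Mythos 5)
Your proposal is correct. The existence half is essentially the paper's argument: the same summation-by-parts coercivity estimate $\langle F(w)-u^n,w\rangle\ge \abs{w}^2-\abs{u^n}\abs{w}$ followed by the Brouwer-type zero-point theorem (the paper cites Ciarlet, Thm.~9.9-3, which is exactly the corollary you invoke). The uniqueness half, however, is genuinely different. The paper proves uniqueness by establishing the $L^1$ contraction \eqref{eq:entydig_stab}: it subtracts the two equations, multiplies by the regularized sign $\signe(u_j-v_j)$, sums by parts, and runs a case-by-case sign analysis on $\sgn{u_{j+1}-v_{j+1}}-\sgn{u_j-v_j}$ versus $G(u_{j+1})-G(v_{j+1})-(G(u_j)-G(v_j))$ before letting $\eps\to0$. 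You instead test the difference of the equations against $r=G(w)-G(\tilde w)$, which produces the identity $\sum_j (w_j-\tilde w_j)\bigl(G(w_j)-G(\tilde w_j)\bigr)+\mu\sum_j \kjp(r_{j+1}-r_j)^2=0$ with both terms nonnegative; termwise vanishing of the first sum already forces $r\equiv 0$ (whether or not $w_j=\tilde w_j$), whence $A(w)=A(\tilde w)$ and $w=\tilde w$ from the identity part. This is a clean, self-contained way to handle the degeneracy of $G$, and it avoids the regularized sign machinery entirely. What the paper's longer route buys is the quantitative stability bound \eqref{eq:entydig_stab} itself, which is not just a vehicle for uniqueness: it is reused later for the $L^1$ contraction \eqref{eq:discl1cont}, the monotonicity of the scheme via the Crandall--Tartar lemma, and the time-continuity estimate \eqref{eq:discL1cont}, so the paper gets those for free, whereas your argument delivers uniqueness only.
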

\begin{proof}
  Let $\alpha=(\alpha_0,\dots,\alpha_N)\in\R^{N+1}$ be given.
  Consider the map $\R^{N+1} \ni u \mapsto F(u)\in \R^{N+1}$, where
  \begin{equation*}
    F(u)_j = u_j - \mu \Dp\left(\kjm \Dm G(u_j)\right)-\alpha_j.
  \end{equation*}
  Then the statement of the lemma is equivalent to the existence of a
  unique $u$ such that $F(u)=0$.  We have
  \begin{align*}
    \big(F(u),u\big)&= \norm{u}^2 -\mu\sum_{j=0}^{N+1}\Dp\left(\kjm \Dm G(u_j)\right)u_j -\big(\alpha, u \big)\\
                    &=\norm{u}^2 +\mu\sum_{j=0}^{N+1}\kjm \Dm G(u_j)\Dm u_j- \big(\alpha, u \big)\\
                    &=\norm{u}^2 +\mu\sum_{j=0}^{N+1}\kjm G'(\tilde u_j)|\Dm u_j|^2 -\big(\alpha, u \big)\\
                    &\ge \big(\norm{u}- \norm{\alpha}\big)\norm{u}.
  \end{align*}
  Here $\tilde u_j$ is a number between $u_j$ and $u_{j+1}$, and we
  have used the monotonicity of $G$.  For a given $\alpha$ with norm
  $r=\norm{\alpha}$, we have that $\big(F(u),u\big)\ge0$ for all $u$
  with $\norm{u}\ge r$. Then \cite[Thm. 9.9-3]{MR3136903} gives the
  existence of an $\bar u$ such that $F(\bar u)=0$.  Uniqueness
  follows from \eqref{eq:entydig_stab}.  More precisely, we consider
  two solutions
  \begin{equation*}
    \begin{gathered}
      \left.
        \begin{aligned}
          u_j - \mu\Dp\left(\kjm\Dm G(u_j)\right) &=f_j\\
          v_j - \mu\Dp\left(\kjm\Dm G(v_j)\right) &=g_j
        \end{aligned}
      \right\} \ \text{for $j=0,\ldots,N$},
    \end{gathered}
  \end{equation*}
  with the boundary conditions $\Dm u_0=\Dp u_N = 0$ and
  $\Dm v_0=\Dp v_N = 0$.  After a detailed analysis of each term we
  conclude (cf.~\eqref{eq:entydig_stab})
  \begin{equation}\label{eq:entydig_stab_0}
    \sum_{j=0}^N \abs{u_j-v_j} \le \sum_{j=0}^N \abs{f_j-g_j}.
  \end{equation}

\end{proof}
% --------------- end lemma
\iffalse the map $\R^{N+1} \ni u \mapsto F(u)\in \R^{N+1}$, where
\begin{equation*}
  F(u)_j = u_j - \mu \Dp\left(\kjm \Dm G(u_j)\right),
\end{equation*}
is continuous, strictly monotone and coercive for $\mu>0$ (as a
straightforward summation by parts will show). Hence by the
Minty--Browder theorem \cite[Thm. 9.14]{MR3136903} a unique solution
$\unpj$ to \eqref{eq:implicit} exists.  \fi

It will also be useful to define the (artificial) value $u^{-1}$ by
taking an explicit step in the negative $t$ direction,
\begin{equation*}
  u^{-1}_j = u^0_j - \mu \Dp\left(k_{j-1/2}\Dm G(u^0_j)\right),\  j=0,\ldots,N,
\end{equation*}
so that the scheme \eqref{eq:implicit} is valid for $n\ge -1$.

% For later use we define the function $u_\Dt(t,x)$ in the region
% $I^n_j=(t_n,t_{n+1}]\times (x_j,x_{j+1}]$ by bilinear interpolation
% of $\unj$, $\unpj$, $u^n_{j+1}$ and $u_{j+1}^{n+1}$ for
% $j=0,\ldots,N-1$ and $n\ge 0$.
%We define the function $u_\Dt(t,x)\colon \Omega\to \R$ by
%$u_\Dt(t,x)=\unj$ for
%$(t,x)\in I^n_j=(t_n,t_{n+1}]\times (x_j,x_{j+1}]$, $j=0,\dots,N$,
%$n\ge 0$.
 
Due to the boundary condition, the scheme is conservative,
\begin{align*}
  \sum_{j=0}^N \unj &= \sum_{j=0}^N \unpj - \mu \sum_{j=0}^N \Dp\left(k_{j-1/2}\Dm
                      G\left(\unpj\right)\right)\\
                    &=\sum_{j=0}^N \unpj + \mu\left(k_{-1/2}\Dm G(u^{n+1}_0) - k_{N+1/2}\Dp
                      G(u^{n+1}_N)\right)\\
                    &=\sum_{j=0}^N \unpj.
\end{align*}
If $\eta(u)$ is a twice continuously differentiable function,
\begin{equation*}
  \eta'\left(\unpj\right)\left(\unpj-\unj\right) = 
  \eta\left(\unpj\right)-\eta(\unj) + \frac12 \eta''\left(u^{n+1/2}_j\right)\left(\unpj-\unj\right)^2,
\end{equation*}
where $u^{n+1/2}_j$ is some value between $\unpj$ and $\unj$. We can
then multiply the scheme \eqref{eq:implicit} by $\eta'(\unpj)$ to find
that
\begin{equation*}
  \eta\left(\unpj\right) - \mu \eta'(\unpj)\Dp\left(k_{j-1/2}\Dm G\left(\unpj\right)\right)
  + \frac12 \eta''\left(u^{n+1/2}_j\right)\left(\unpj-\unj\right)^2 
  =\eta(\unj).
\end{equation*}
Using the ``Leibniz rule''
$\Dp \left(a_j b_j\right) = a_j\Dp b_j + b_{j+1}\Dp a_j$, this can be
rewritten
\begin{multline}\label{eq:discpointentr}
  \eta\left(\unpj\right) - \mu \Dp\left[\kjm\eta'(\unpj)\Dm
    G\left(\unpj\right)\right]\\
  +\mu \kjp\left(\Dp \eta'(\unpj)\right)\left(\Dp G(\unpj)\right)
  + \frac12 \eta''\left(u^{n+1/2}_j\right)\left(\unpj-\unj\right)^2 \\
  =\eta(\unj).
\end{multline}
If we sum this over $j$, then, due to the boundary conditions, the
second term on the left vanishes, and we are left with
\begin{multline}
  \label{eq:discentr}
  \sum_{j=0}^N \eta(\unpj) + \mu \sum_{j=0}^N \kjp\left(\Dp
    \eta'(\unpj)\right)\left(\Dp
    G(\unpj)\right) \\
  +\frac12 \sum_{j=0}^N
  \eta''\left(u^{n+1/2}_j\right)\left(\unpj-\unj\right)^2
  =\sum_{j=0}^N\eta(\unj).
\end{multline}
Setting $\eta(u)=\frac12 u^2$, we get the ``$L^2$'' bound
\begin{align*}
  \frac12 \sum_{j=0}^N \left(\unpj\right)^2 &+ \mu\sum_{j=0}^N \kjp\left(\Dp
                                              \unpj\right)
                                              \left(\Dp G(\unpj)\right)\\
                                            &\qquad  + \frac12\mu^2\sum_{j=0}^N \left(\Dp\big(\kjp\Dm
                                              G(\unpj)\big)\right)^2
                                              =\frac12 \sum_{j=0}^N \left(\unj\right)^2.
\end{align*}
Summing this over $n=0,\ldots,M-1$ we find
that %\marginlabel{Check summation limits!}
\begin{align*}
  \sum_{j=0}^N\left( u^M_j\right)^2 &+
                                      2 \mu\sum_{n=0}^{M-1} \sum_{j=0}^N \kjp\left(\Dp
                                      \unj\right)
                                      \left(\Dp G(\unj)\right) \\
                                    &\qquad + \mu^2  \sum_{n=0}^{M-1}\sum_{j=0}^N\left(\Dp\big(\kjp\Dm
                                      G(\unj)\big)\right)^2
                                      = \sum_{j=0}^N \left(u^0_j\right)^2.
\end{align*}
In particular, this implies the uniform $L^2$ bound on $u_\Dt$,
\begin{equation}
  \label{eq:discl2}
  \norm{u_\Dt(t,\dott)}_{L^2(B)} \le \norm{u_0}_{L^2(B)}. %\quad \text{\textcolor{red}{\small H: Is is this true with bilinear interpolation?}}
\end{equation}
We can also choose $\eta(u)=\abseps{u}$ in \eqref{eq:discentr}, and
then let $\eps\to 0$ to conclude that
\begin{equation*}
  \norm{u_\Dt(t,\dott)}_{L^1(B)} \le \norm{u_0}_{L^1(B)}.%+\order{\Dt^2}. \quad %\text{\textcolor{red}{\small H: Why the correction term?}}
\end{equation*}
If we choose $\eta(u)=\int_0^u G(v)\,dv$ in equation
\eqref{eq:discentr} we find
\begin{equation*}
  \sum_{j=0}^N \eta(\unpj) + \mu \sum_{j=0}^N \kjp \left(\Dp
    G\left(\unpj\right)\right)^2 
  \le \sum_{j=0}^N \eta(\unj).
\end{equation*}
If $\Dx\sum_j \eta(u^0_j)$ is uniformly bounded, then
\begin{equation}
  \label{eq:Dgl2bnd}
  \Dt\Dx \sum_{n=0}^M \sum_{j=0}^N\kjp \Bigl(\frac{\Dp G(\unpj)}{\Dx}\Bigr)^2 \le C.
\end{equation}

Next we show stability of solutions to \eqref{eq:implicit} with
respect to the initial data. To keep the notation simple, let $u$ and
$v$ solve
\begin{equation*}
  \begin{gathered}
    \left.
      \begin{aligned}
        u_j - \mu\Dp\left(\kjm\Dm G(u_j)\right) &=f_j\\
        v_j - \mu\Dp\left(\kjm\Dm G(v_j)\right) &=g_j
      \end{aligned}
    \right\}
    \ \text{for $j=0,\ldots,N$},\\
    \text{with the boundary conditions}\ \left\{
      \begin{aligned}
        \Dm u_0&=\Dp u_N = 0,\\
        \Dm v_0&=\Dp v_N = 0.
      \end{aligned}
    \right.
  \end{gathered}
\end{equation*}
Subtracting these equations
\begin{equation*}
  \left(u_j-v_j\right) - \mu\Dp\left(\kjm \Dm
    \left(G(u_j)-G(v_j)\right)\right)
  =f_j-g_j.
\end{equation*}
Multiplying with $\signe(u_j-v_j)$
\begin{align*}
  \signe(u_j-v_j) \left(u_j-v_j\right) &- \mu\Dp\left[
    \kjm\signe(u_j-v_j) \Dm\left(G(u_j)-G(v_j)\right)\right]
  \\
  &+\mu \kjp \left[\Dp
    \signe(u_j-v_j)\right]\left[\Dp\left(G(u_j)-G(v_j)\right)\right]\\
  &\qquad\qquad\qquad\qquad \qquad\qquad =  \signe(u_j-v_j)\left(f_j-g_j\right).
\end{align*}
Summing over $j=1,\ldots,N$ and sending $\eps\to 0$,
\begin{equation*}
  \sum_{j=0}^N \abs{u_j-v_j} + \mu\sum_{j=0}^N \kjp
  \left[\Dp
    \sgn{u_j-v_j}\right]\left[\Dp\left(G(u_j)-G(v_j)\right)\right]\le 
  \sum_{j=0}^N \abs{f_j-g_j}.
\end{equation*}
Consider the second sum on the left, each summand reads
\begin{equation*}
  \kjp 
  \left[\sgn{u_{j+1}-v_{j+1}}-\sgn{u_j-v_j}\right] 
  \left[G(u_{j+1})-G(v_{j+1})-(G(u_j)-G(v_j))\right].
\end{equation*}
We have that
\begin{equation*}
  \sgn{u_{j+1}-v_{j+1}}-\sgn{u_j-v_j} = 
  \begin{cases}
    2 &\text{if $u_{j+1}>v_{j+1}$ and $u_j<v_j$,}\\
    1 &\text{if $u_{j+1}>v_{j+1}$ and $u_j=v_j$,}\\
    1 &\text{if $u_{j+1}=v_{j+1}$ and $u_j<v_j$,}\\
    -1&\text{if $u_{j+1}=v_{j+1}$ and $u_j>v_j$,}\\
    -1&\text{if $u_{j+1}<v_{j+1}$ and $u_j=v_j$,}\\
    -2&\text{if $u_{j+1}<v_{j+1}$ and $u_j>v_j$,}\\
    0&\text{otherwise.}
  \end{cases}
\end{equation*}
Similarly we find that
\begin{equation*}
  G(u_{j+1})-G(v_{j+1})-(G(u_j)-G(v_j))
  \begin{cases}
    \ge 0 &\text{if $u_{j+1}>v_{j+1}$ and $u_j<v_j$,}\\
    \ge 0 &\text{if $u_{j+1}>v_{j+1}$ and $u_j=v_j$,}\\
    \ge 0 &\text{if $u_{j+1}=v_{j+1}$ and $u_j<v_j$,}\\
    \le 0&\text{if $u_{j+1}=v_{j+1}$ and $u_j>v_j$,}\\
    \le 0&\text{if $u_{j+1}<v_{j+1}$ and $u_j=v_j$,}\\
    \le 0&\text{if $u_{j+1}<v_{j+1}$ and $u_j>v_j$.}\\
  \end{cases}
\end{equation*}
Thus each summand in the second sum over $j$ above is nonnegative and
we find that
\begin{equation} \label{eq:entydig_stab} \sum_{j=0}^N \abs{u_j-v_j}
  \le \sum_{j=0}^N \abs{f_j-g_j}.
\end{equation}
This completes the argument used to prove \eqref{eq:entydig_stab_0}.

If $v^n_j$ is another solution of \eqref{eq:implicit}, with initial
data $v^0_j$, then we have that
\begin{equation}
  \label{eq:discl1cont}
  \sum_{j=0}^N \abs{\unj-v^n_j}\le \sum_{j=0}^N \abs{u^0_j-v^0_j}.
\end{equation}
Since the update $u^n\mapsto u^{n+1}$ is conservative, by the
Crandall--Tartar lemma \cite[Lemma 2.13]{MR3443431}, it is also
monotone, i.e., if $u^n_j\le v^n_j$, then $u^{n+1}_j\le v^{n+1}_j$. If
we set $v^n_j=u^{n+1}_j$ we get the estimate
\begin{equation*}
  \sum_{j=0}^N \abs{\unpj-\unj} \le \sum_{j=0}^N \abs{u^0_j -
    u^{-1}_j}=\mu\sum_{j=0}^N \abs{\Dp\left(\kjm\Dm G(u^0_j)\right)}.
\end{equation*}
Now we assume that $u_0$ is such that $k \px G(u_0)\in BV(B)$.  This
then gives the estimates
\begin{align}
  \label{eq:discL1cont}
  &\begin{aligned}
    \norm{u_\Dt(t+\Dt,\dott)-u_\Dt(t,\dott)}_{L^1(B)} &\le C\Dt \int_B
    \abs{\px(k\px G(u_0(x)))}\,dx\\ 
    &= C\Dt \abs{k\px G(u_0)}_{BV(B)},
  \end{aligned}\\
  \intertext{and} &\Dx\sum_{j=0}^N \Bigl|\frac{\Dp\left(\kjm\Dm
                    G(u^n_j)\right)}{\Dx^2}\Bigr| \le C\abs{k\px
                    G(u_0)}_{BV(B)}\label{eq:Gxbvdisc}.
\end{align}
As for the viscous regularization \eqref{eq:regular}, we now establish
a $BV$ bound on $u_\Dx$. Define $v^n_{j-1/2}=\Dm u^n_j$. Then
$v^n_{-1/2}=v^n_{N+1/2}=0$, and $v^n_{j-1/2}$ solves the equation
\begin{equation}
  \label{eq:vdef}
  \vnmj - \mu\Dm\Dp\left(\kjm\Gamma_{j-1/2}^{n+1} \vnmj\right) = v^n_{j-1/2}, \quad j=1,\ldots,N,
\end{equation}
where
\begin{equation*}
  \Gamma_{j-1/2}^{n+1} = \frac{\Dm G(\unpj)}{\Dm \unpj}= \frac{\Dm G(\unpj)}{\vnmj}.
  % \frac{G(u^{n+1}_{j+1})-G(\unpj)}{u^{n+1}_{j+1}-\unpj}.
\end{equation*}
Set $\eta(v)=\abseps{v}$, multiply with $\eta'(v_{j-1/2}^{n+1})$ to
get
\begin{multline*}
  \eta(v_{j-1/2}^{n+1}) - \mu
  \Dp\left[\eta'(v_{j-1/2}^{n+1})\Dm\left(\kjm\Gamma_{j-1/2}^{n+1}
      v_{j-1/2}^{n+1}\right)\right]\\
  -\mu\kjp\left[\Dp
    \eta'(v_{j-1/2}^{n+1})\right]\left[\Dp\left(\kjm\Gamma_{j-1/2}^{n+1}v_{j-1/2}^{n+1}\right)\right]\le
  \eta(v_{j-1/2}^{n}).
\end{multline*}
% \begin{multline*}
%   \eta(\vnpj) - \mu
%   \Dp\left[\eta'(\vnpj)\kjm\Dm\left(\Gamma_{j+1/2}^{n+1}
%       \vnpj\right)\right]\\
%   +\mu\kjp\left[\Dp
%     \eta'(\vnpj)\right]\left[\Dp\left(\Gamma_{j+1/2}^{n+1}\vnpj\right)\right]\le
%   \eta(\vnj).
% \end{multline*}
Here we have used that
\begin{equation*}
  \eta(v_{j-1/2}^{n})- \eta(v_{j-1/2}^{n+1}) \ge \big(v_{j-1/2}^{n}-v_{j-1/2}^{n+1})\big)\eta'(v_{j-1/2}^{n+1})
\end{equation*}
due to the convexity of $\eta$, and
\begin{align*}
  \eta'(v_{j-1/2}^{n+1})\Dm\Dp& \left(\kjm\Gamma_{j-1/2}^{n+1} \vnmj\right)  \\
                              &\quad =\Dp\Big[ \eta'(v_{j-1/2}^{n+1})\Dm\left(\kjm\Gamma_{j-1/2}^{n+1} \vnmj\right) \Big] \\
                              &\qquad  - \big(\Dp\eta'(v_{j-1/2}^{n+1})\big)S^+\Big(\Dm\left(\kjm\Gamma_{j-1/2}^{n+1} \vnmj\right)\Big) \\
                              &\quad= \Dp\Big[ \eta'(v_{j-1/2}^{n+1})\Dm\left(\kjm\Gamma_{j-1/2}^{n+1} \vnmj\right) \Big]\\
                              &\qquad
                                - \big(\Dp\eta'(v_{j-1/2}^{n+1})\big)\Dp\left(\kjm\Gamma_{j-1/2}^{n+1} \vnmj\right).
\end{align*}
Now we sum this over $j=0,\ldots,N$, the second term on the left
vanishes due to the boundary condition on $v_{j-1/2}^{n+1}$, and the
fact that $\eta'(0)=0$. Then we can let $\eps\to 0$ to conclude that
\begin{equation*}
  \sum_{j=0}^{N-1} \abs{\vnpj} + \mu\sum_{j=1}^{N} 
  \kjm\left[\Dp
    \sgn{\vnmj}\right]\left[\Dp\left(\Gamma_{j-1/2}\vnmj\right)\right]
  \le \sum_{j=0}^{N-1} \abs{\vnj}.
\end{equation*}
Now
\begin{equation*}
  \sgn{\vnpj}-\sgn{\vnmj}=
  \begin{cases}
    2 & \vnmj<0<\vnpj,\\
    1 & \vnmj<0=\vnpj,\\
    1 & \vnmj=0<\vnpj,\\
    -1& \vnmj=0>\vnpj,\\
    -1& \vnmj>0=\vnpj,\\
    -2& \vnmj>0>\vnpj,\\
    0 &\text{otherwise,}
  \end{cases}
\end{equation*}
and
\begin{equation*}
  \Gamma_{j+1/2}^{n+1}\vnpj - \Gamma_{j-1/2}^{n+1} \vnmj
  \begin{cases}
    \ge 0& \vnmj<0<\vnpj,\\
    \ge 0& \vnmj<0=\vnpj,\\
    \ge 0& \vnmj=0<\vnpj,\\
    \le 0& \vnmj=0>\vnpj,\\
    \le 0& \vnmj>0=\vnpj,\\
    \le 0& \vnmj>0>\vnpj.
  \end{cases}
\end{equation*}
Thus the second sum above is nonnegative, and we conclude that
\begin{equation}
  \label{eq:discBV}
  \sum_{j=0}^{N-1} \abs{u^n_{j+1}-\unj}\le \sum_{j=0}^{N-1}
  \abs{u^0_{j+1}-u^0_j}, \ n\ge 0.
\end{equation}
We have established that $\seq{u_{\Dt}}_{\Dt>0}$ is uniformly bounded
in $L^\infty(\Omega_T)$ and also $C([0,T];L^1(B))$, and that we have a
uniform (in $t$ and $\Dt$) bound on $\abs{u_\Dt(t,\dott)}_{BV(B)}$. By
Kolmogorov--Riesz's theorem (see \cite[Thm.~A.11]{MR3443431},
\cite{MR2734454,MR3964217}), there exists a subsequence of $\Dt$'s and
a function $u=u(t,x)$, such that $u_\Dt \to u$ in
$C([0,T];L^1(B))$. Since $G$ is a Lipschitz continuous function, also
$G(u_\Dt)$ converges to $G(u)$. We shall have use for the notation
\begin{equation*}
  D_{\pm}=\frac{1}{\Dx}\Delta_{\pm} \ \text{and}\ \dpt
  \eta\left(\unj\right) = \frac{\eta(\unpj)-\eta(\unj)}{\Dt}.
\end{equation*}
Define the continuous function $k DG_\Dx$ by piecewise linear
interpolation
\begin{equation*}
  kDG_\Dx(t,x)=\kjm \ddm G(u_\Dt(t,x_j)) + \frac{x-x_{j-1/2}}{\Dx} \ddp\left(\kjm\ddm
    G(u_\Dt(t,x_j))\right), 
\end{equation*}
for $x\in (x_{j-1/2},x_{j+1/2}]$. Then, for fixed $t$, by the estimate
\eqref{eq:Gxbvdisc}, $kDG_\Dx(t,\dott)$ is in $BV(B)$ (with a
uniformly bounded $BV$ seminorm). Hence there is a (further)
subsequence of $\Dt$'s such that $kDG_\Dt(t,\dott)\to H(t,\dott)$ in
$L^1(B)$. Furthermore $H=k \px G(u)$ weakly in $B$. This also implies
that $\px G(u)\in L^\infty([0,T]; BV(B))$. The bound
\eqref{eq:Dgl2bnd} implies that $\px G(u)$ is in $L^2(\Omega_T)$. In
order to prove that $u$ is an entropy solution, we start with the
discrete entropy inequality \eqref{eq:discpointentr}, which implies
that
\begin{multline*}
  \dpt \eta\left(\unj\right) - \ddp\left(\eta'(\unpj)\kjm \ddm
    G(\unpj)\right) \\
  + \kjp\left(\ddp \eta'(\unpj)\right)\left(\ddp G(\unpj)\right) \le
  0.
\end{multline*}
Let $\test$ be a suitable nonnegative test function and set
\begin{equation*}
  \test^n_j = \frac{1}{\Dt\Dx}\int_{t_n}^{t_{n+1}}
  \int_{x_{j-1/2}}^{x_{j+1/2}} \test(s,y)\,dyds.
\end{equation*}
We multiply the above inequality with $\Dt\Dx\test^n_j$ and do
summation by parts in $n$ and $j$ to get
\begin{align}
  \Dt\Dx \sum_{n=0}^M\sum_{j=1}^N &\eta^n_j D^t_- \test^n_j -
                                    \Dx\sum_{j=1}^N \eta^{M+1}_j\test^{M}_j  + \Dx\sum_{j=0}^N \eta^0_j
                                    \test^0_j\label{eq:tpart}\\
                                  &\quad -\Dt\Dx\sum_{n=0}^M \sum_{j=0}^N \big(\eta^\prime\big)^{n+1}_j \kjm \ddm G^{n+1}_j
                                    \ddm\test^{n}_j \label{eq:convpart}\\
                                  &\quad -\Dt\Dx \sum_{n=0}^M \sum_{j=0}^N \kjp\left(\ddp
                                    \big(\eta^\prime\big)^{n+1}_j\right)\left(\ddp G^{n+1}_j\right)\test^n_j \ge
                                    0, \label{eq:squarepart}
\end{align}
where $\eta^{n+1}_j=\eta(\unpj)$, etc. By using the compactness
properties of $u_\Dt$, in particular that $u_\Dt(t,\dott)\in BV(B)$
and that $\px G(u_\Dt)$ is bounded, it follows by a standard
Lax--Wendroff calculations, see \cite[Thm.~3.4]{MR3443431} that
\begin{align*}
  \lim_{\Dt\to 0} \eqref{eq:tpart}&= \int_{\Omega_T}
                                    \eta(u)\pt\test\,dxdt - \int_B \eta(u(T,x)\test(T,x)\,dx + \int_B
                                    \eta(u_0(x))\test(0,x)\,dx,\\ 
  \intertext{and}
  \lim_{\Dt\to 0} \eqref{eq:convpart}&=
                                       -\int_{\Omega_T} \eta'(u)k\px G(u)\px\test \,dxdt.
\end{align*}
Regarding \eqref{eq:squarepart}, by Jensen's inequality
\begin{align*}
  \left(\Dp
  \big(\eta^\prime\big)^{n+1}_j\right)\left(\Dp G^{n+1}_j\right)\test^n_j&=
                                                                           \eta''(u^{n+1}_{j+1/2}) \Dp\unpj \Dp G^{n+1}_j\\
                                                                         &=\eta''(u^{n+1}_{j+1/2}) \left(\Dp \unpj\right)^2
                                                                           \Bigl[ \frac{1}{\Dp \unpj} \int_{\unpj}^{u^{n+1}_{j+1}}
                                                                           \left(g'(s)\right)^2 \,ds\Bigr]\\
                                                                         &\ge \eta''(u^{n+1}_{j+1/2}) \left(\Dp \unpj\right)^2 
                                                                           \Bigl[ \frac{1}{\Dp \unpj} \int_{\unpj}^{u^{n+1}_{j+1}}
                                                                           g'(s) \,ds\Bigr]^2\\
                                                                         &=\eta''(u^{n+1}_{j+1/2}) \left(\Dp g^{n+1}_j\right)^2,
\end{align*}
where $u^{n+1}_{j+1/2}$ is between $\unpj$ and $u^{n+1}_{j+1}$ and
$g(u)=\int^u \sqrt{G'(s)}\,ds$. Since $\eta''$ is continuous,
``$\eta''(u^{n+1}_{j+1/2}) \to \eta''(u)$'' as $\Dt\to 0$. We also
have that $g(u_\Dt)\to g(u)$ and that $\px g(u_\Dt)\to \px g(u)$
weakly. Therefore
\begin{align*}
  \lim_{\Dt\to 0} \Dt\Dx \sum_{n=0}^M \sum_{j=0}^N &\kjp\left(\ddp
                                                     \big(\eta^\prime\big)^{n+1}_j\right)\left(\ddp G^{n+1}_j\right)\test^n_j \\
                                                   &\ge
                                                     \lim_{\Dt\to 0} \Dt\Dx \sum_{n=0}^M \sum_{j=0}^N  \kjp
                                                     \eta''(u^{n+1}_{j+1/2}) \left(\ddp g^{n+1}_j\right)^2\test^n_j\\
                                                   &=\int_{\Omega_T} \eta''(u) k \left(\px g(u)\right)^2 \test\,dxdt.
\end{align*}
We have now shown that the limit $u$ is an entropy solution, i.e., we
have proved to following theorem:
\begin{theorem}
  \label{thm:scheme}
  Assume that $G$ is a non-decreasing, Lipschitz continuous function,
  that $k\in C^1([0,1])$ is a strictly positive function, and that the
  initial data $u_0\in BV([0,1])\cap L^\infty([0,1])$. Then the
  sequence $\seq{u_\Dt}_{\Dt>0}$ defined by \eqref{eq:diff_uDt},
  \eqref{eq:diff_initialcond} and the scheme \eqref{eq:implicit}
  converges to an entropy solution of \eqref{eq:boundval} for
  $B=(0,1)$.
\end{theorem}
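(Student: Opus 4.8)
The plan is to extract a subsequence of $\seq{u_\Dt}_{\Dt>0}$ by compactness, identify the limits of the discrete flux and gradient, and then pass to the limit in the discrete entropy inequality \eqref{eq:discpointentr} to recover the conditions of Definition~\ref{def:entrsolk_new}. First I would assemble the uniform (in $\Dt$) bounds already at hand: the discrete maximum principle gives an $L^\infty(\Omega_T)$ bound, \eqref{eq:discl2} gives the $L^2$ bound, the spatial estimate \eqref{eq:discBV} controls $\abs{u_\Dt(t,\dott)}_{BV(B)}$ uniformly in $t$ and $\Dt$, and \eqref{eq:discL1cont} furnishes an $L^1$-in-time modulus of continuity. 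The spatial $BV$ bound together with the temporal modulus yields equicontinuity in $L^1(B)$, so Kolmogorov--Riesz in the form \cite[Thm.~A.11]{MR3443431} produces a subsequence $\Dt\to0$ and a limit $u\in C([0,T];L^1(B))\cap L^\infty(\Omega_T)$ with $u_\Dt\to u$ in $C([0,T];L^1(B))$, hence a.e. Since $G$ and $g$ are Lipschitz, this upgrades to $G(u_\Dt)\to G(u)$ and $g(u_\Dt)\to g(u)$.

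Next I would pin down the weak limits of the discrete gradients. The bound \eqref{eq:Dgl2bnd} shows $\px G(u_\Dt)$ is bounded in $L^2(\Omega_T)$, so along a further subsequence $\px G(u_\Dt)\weak\px G(u)$ and $\px g(u_\Dt)\weak\px g(u)$ weakly in $L^2(\Omega_T)$, which verifies \eqref{eq:visc1k}; the piecewise-linear reconstruction $kDG_\Dx$ identifies the flux limit as $k\px G(u)$ and places $\px G(u)$ in $L^\infty([0,T];BV(B))\cap L^2(\Omega_T)$. Because the scheme is conservative, the Lipschitz-in-time bound \eqref{eq:discL1cont} gives $\dv{k\px G(u)}=\pt u\in L^\infty((0,\infty);L^1(B))$, so $(k\px G(u))\cdot\nu$ admits a trace by \cite[Theorem 2.1]{Chen2001}, exactly as in the proof of Theorem~\ref{th:existence}, yielding \eqref{eq:visc2k}.

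The heart of the matter is passing to the limit in \eqref{eq:discpointentr}. For a fixed convex $\eta\in C^2$ and nonnegative $\test$, I would multiply by $\Dt\Dx\test^n_j$ with $\test^n_j$ the cell average of $\test$, and sum by parts in $n$ and $j$, obtaining the three groups \eqref{eq:tpart}--\eqref{eq:squarepart}. The time and flux groups \eqref{eq:tpart}, \eqref{eq:convpart} converge to their continuous counterparts by a standard Lax--Wendroff computation \cite[Thm.~3.4]{MR3443431}, using the strong convergence of $u_\Dt$ and the weak convergence of the flux. For the dissipation group \eqref{eq:squarepart}, Jensen's inequality bounds its summand below by $\kjp\,\eta''(u^{n+1}_{j+1/2})(\Dp g^{n+1}_j)^2\test^n_j$, trading the unbounded $G$-difference quotient for the controlled $g$-difference quotient.

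The hard part is taking the limit in this dissipation term \emph{with the correct inequality direction}, since $\px g(u_\Dt)$ enters quadratically but converges only weakly. Here I would invoke weak lower semicontinuity: because $\eta''\ge0$, $k>0$, and $\test\ge0$, the functional $w\mapsto\int_{\Omega_T}\eta''(u)\,k\,w^2\,\test\,dxdt$ is convex and weakly lower semicontinuous, while $\eta''(u^{n+1}_{j+1/2})\to\eta''(u)$ strongly by the a.e. convergence of $u_\Dt$ and continuity of $\eta''$; hence the $\liminf$ of the discrete dissipation dominates $\int_{\Omega_T}\eta''(u)\,k\,(\px g(u))^2\,\test\,dxdt$. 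Assembling the three limits produces precisely the entropy inequality \eqref{eq:entrknew} with $h=\sqrt{k}$, so $u$ is an entropy solution in the sense of Definition~\ref{def:entrsolk_new}; the weak formulation \eqref{eq:defweak} follows from the linear choices $\eta(u)=\pm u$, for which the dissipation term vanishes.
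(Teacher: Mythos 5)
Your proposal is correct and follows essentially the same route as the paper's proof: the same uniform bounds ($L^\infty$, $L^2$, spatial $BV$, $L^1$ time modulus) feeding Kolmogorov--Riesz compactness, the same flux identification via the piecewise-linear reconstruction $kDG_\Dx$, and the same Lax--Wendroff passage to the limit with Jensen's inequality handling the dissipation term. You are in fact slightly more careful than the paper at two points: you invoke weak lower semicontinuity to justify the inequality direction in the quadratic dissipation term (where the paper somewhat loosely writes an equality for the limit), and you explicitly verify the trace condition \eqref{eq:visc2k} via \cite[Theorem 2.1]{Chen2001}, a step the paper's Section \ref{sec:num} argument leaves implicit.
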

\begin{proof}
  The compactness and convergence of a subsequence is already proved
  above, and we observe that since the entropy solution is unique, the
  whole sequence converges.
\end{proof}
\subsection{An example}\label{subsec:numex}
We present an example of how this method works in practice. Let
\begin{equation}
  \label{eq:Gnumex}
  G(u)=
  \begin{cases}
    0 &u<0, \\
    u(2-u) & 0\le u \le 1,\\
    1 & 1<u,
  \end{cases}
\end{equation}
and
\begin{equation}
  \label{eq:initial_numex}
  u_0(x)=2\sin(2\pi x).
\end{equation}
In Figure~\ref{fig:1} we show the approximate solution for $t=0$,
$t=0.07$, $t=0.13$ and $t=0.2$, computed with the implicit finite difference scheme
 \eqref{eq:implicit} with
$N=512$, $\Dx=1/513$, and $\Dt=0.01\Dx$. The nonlinear equation was
solved numerically using Newton iteration which terminated when the
error was less than $0.1\Dx^2$.
\begin{figure}[h]
  \centering
  \begin{tabular}{lr}
    \includegraphics[width=0.45\linewidth]{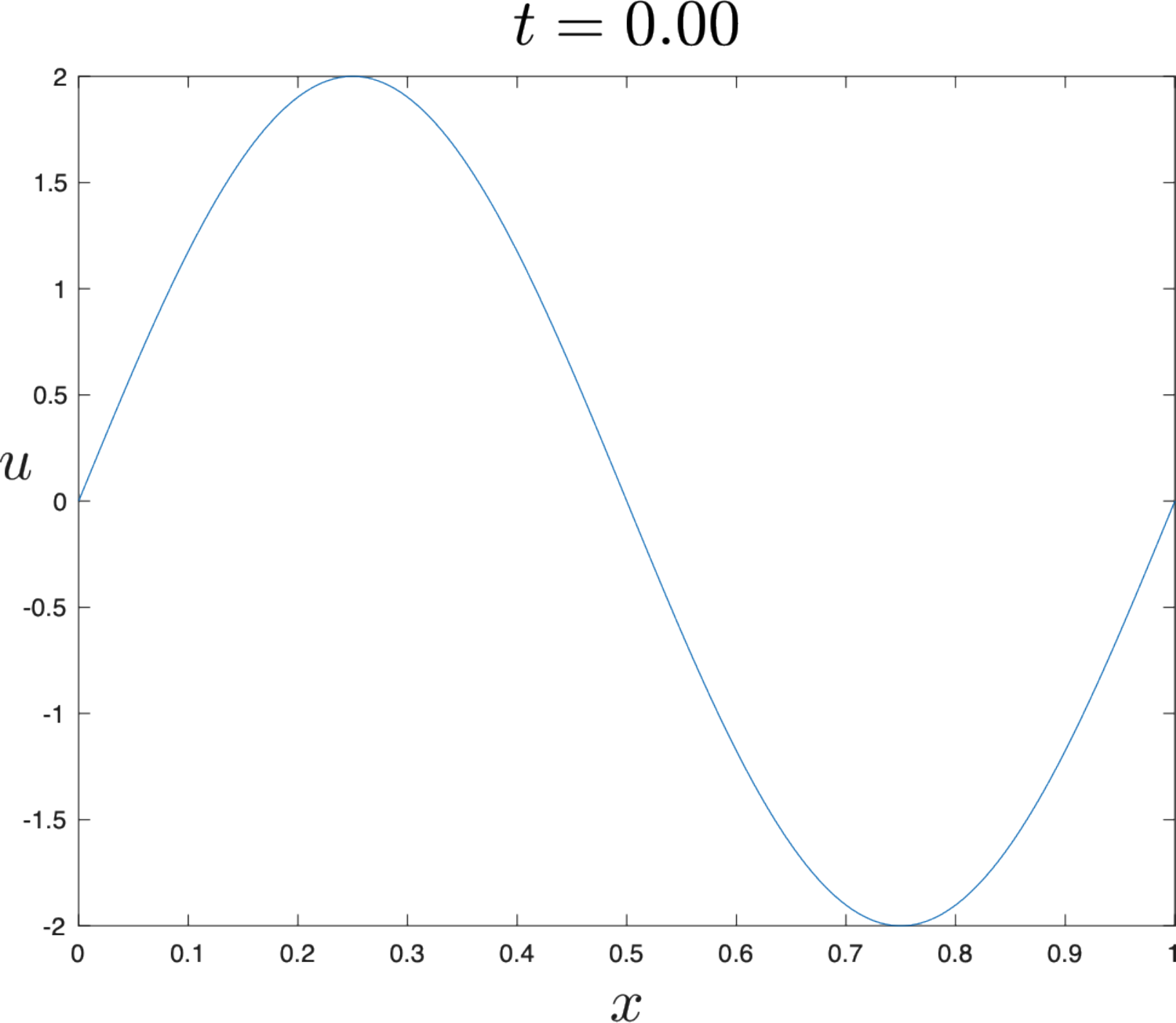}
    &\includegraphics[width=0.45\linewidth]{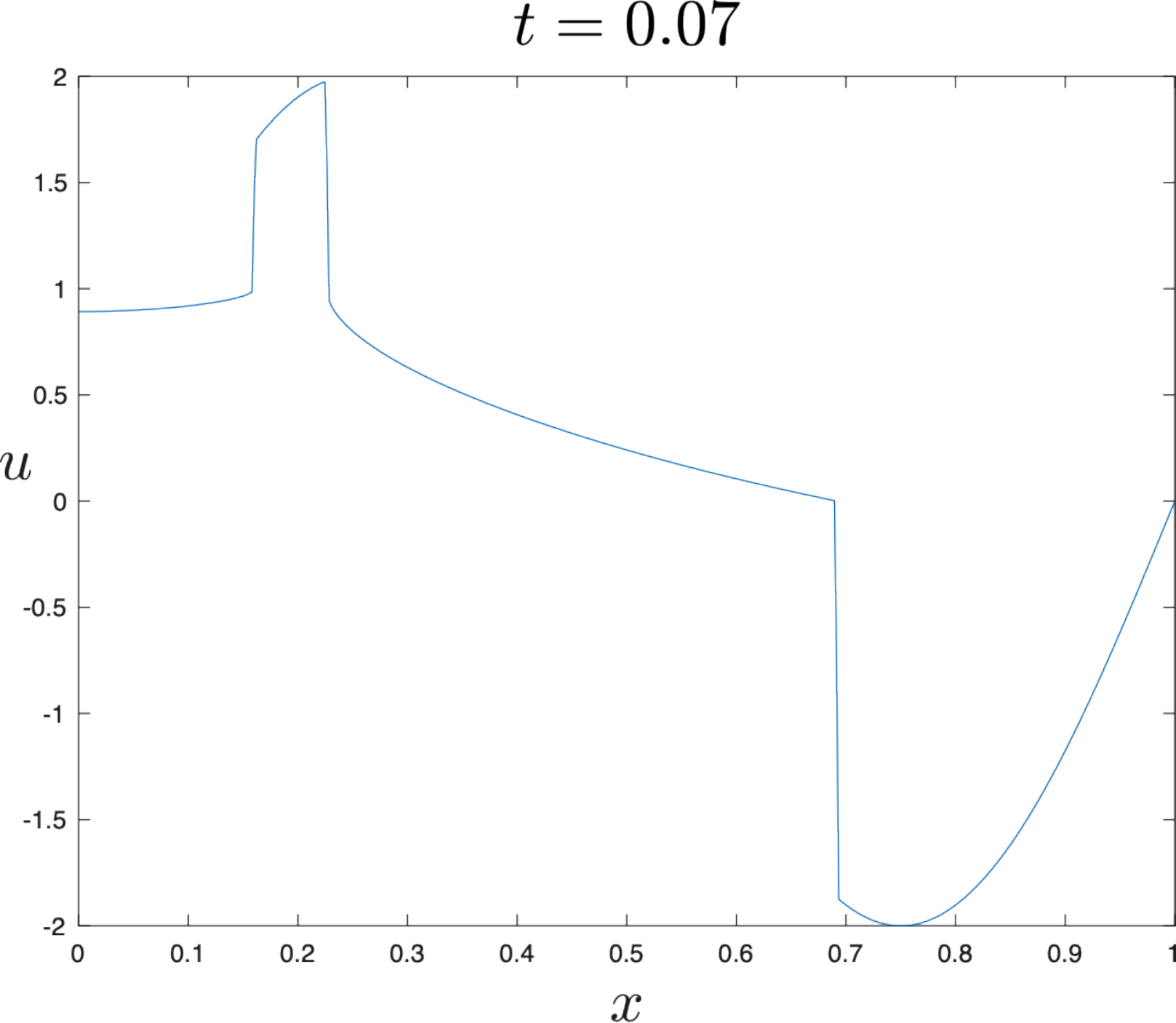}\\
    \includegraphics[width=0.45\linewidth]{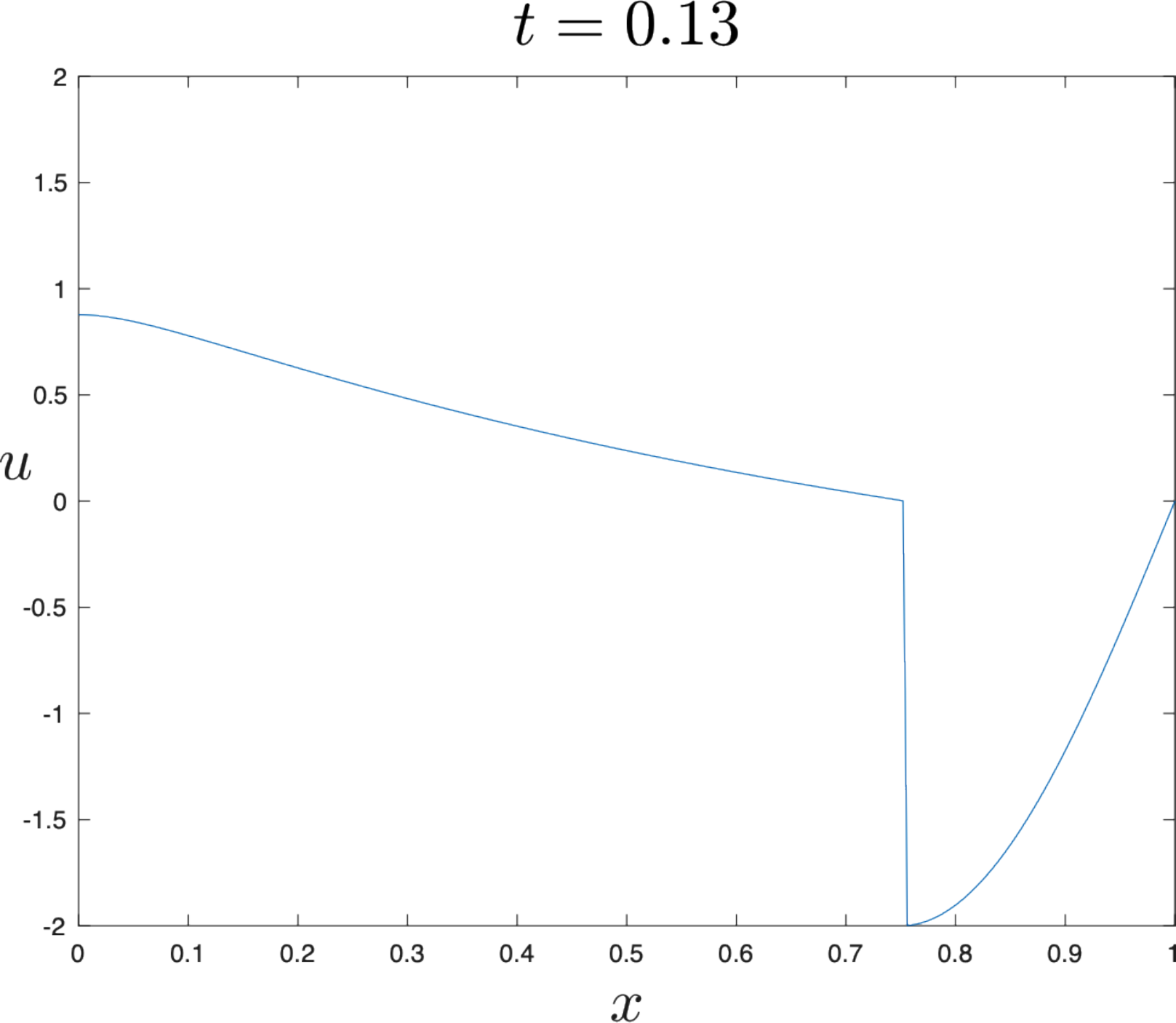}
    &\includegraphics[width=0.45\linewidth]{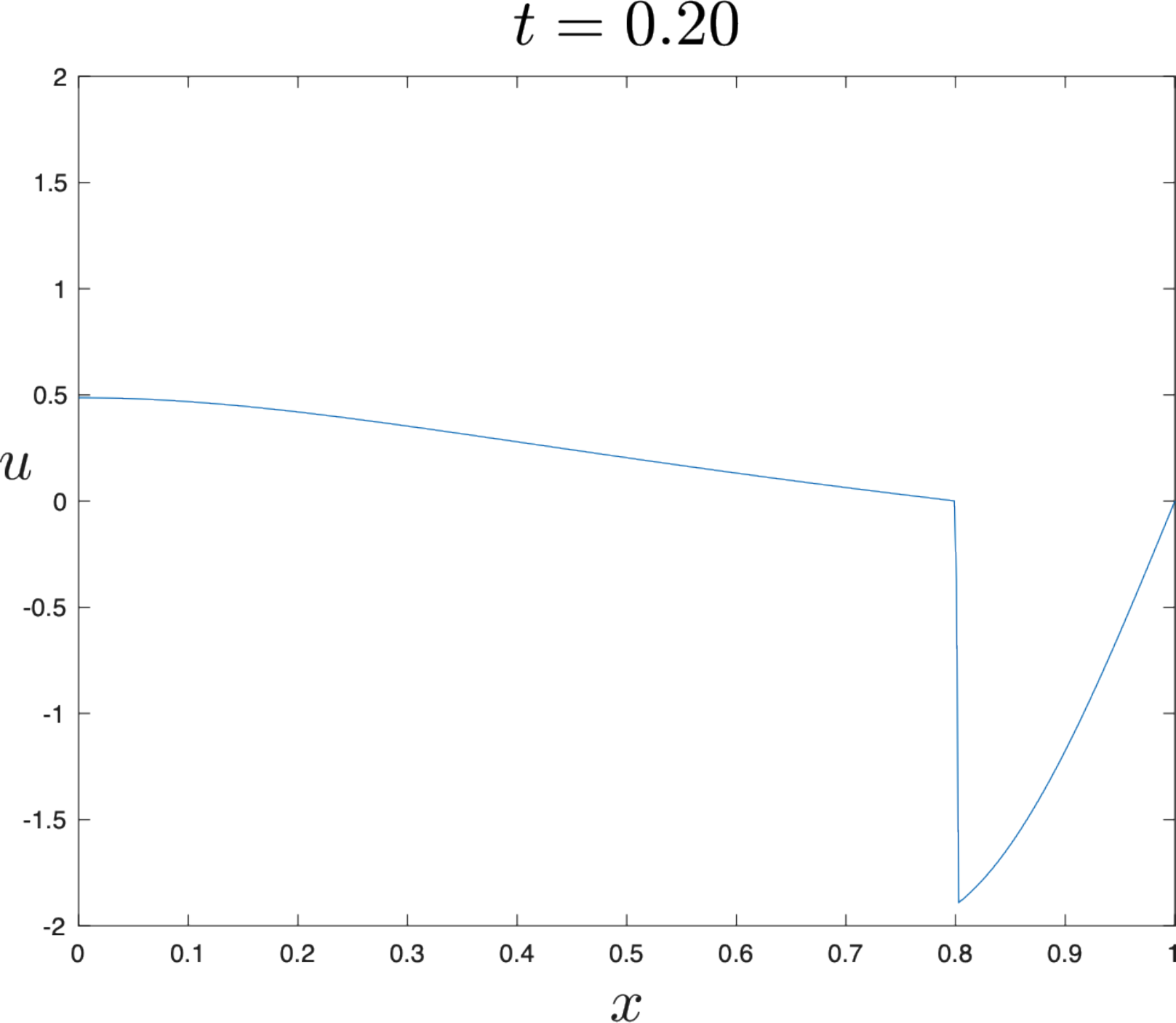}
  \end{tabular}
  \caption{The approximate solution for various times.}
  \label{fig:1}
\end{figure}
In Figure~\ref{fig:2} we also show the same approximate solution as a
function of $(t,x)$.
\begin{figure}[h]
  \centering \includegraphics[width=0.65\linewidth]{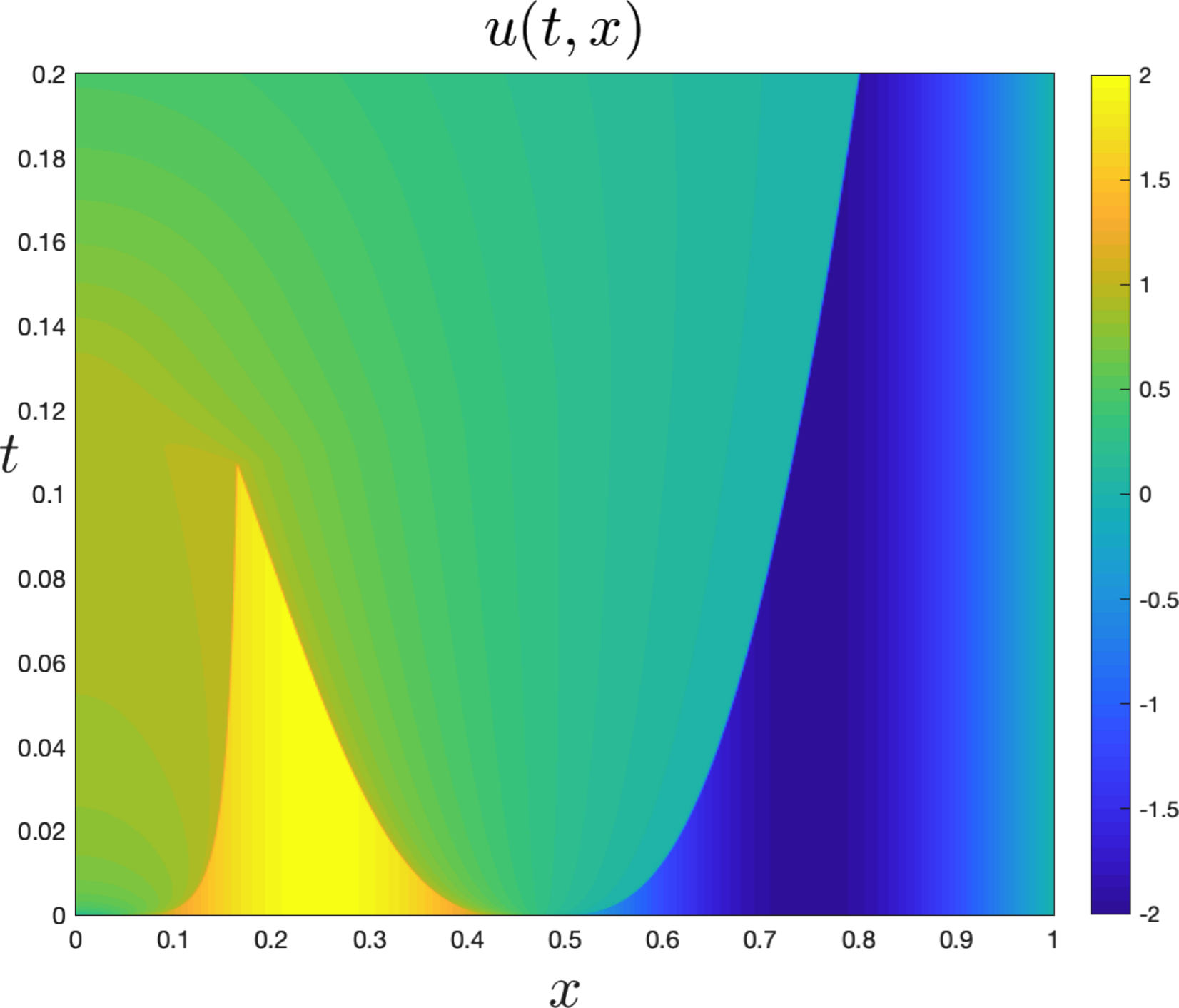}
  \caption{The appoximate solution for $0\le t\le 0.2$.}
  \label{fig:2}
\end{figure}

\end{document}